\newtheorem{theorem}{Theorem}[section]
\newtheorem{corollary}[theorem]{Corollary}
\newtheorem{lemma}[theorem]{Lemma}
\newtheorem{proposition}[theorem]{Proposition}
\theoremstyle{definition} \newtheorem{definition}[theorem]{Definition}
\newtheorem{remark}[theorem]{Remark}
\newtheorem{example}[theorem]{Example}
\newcommand{\K}{\Bbbk}
\DeclareMathOperator{\trop}{trop}
\DeclareMathOperator{\inn}{in}
\DeclareMathOperator{\im}{im}
\DeclareMathOperator{\Hom}{Hom}
\DeclareMathOperator{\conv}{conv}
\DeclareMathOperator{\val}{val}
\DeclareMathOperator{\spann}{span}
\DeclareMathOperator{\Gr}{Gr}
\newcommand{\PuiseuxC}{\ensuremath{\mathbb C \{\!\{t \} \! \}}}
\begin{document}

\title{Polyhedral structures on tropical varieties}

\author{Diane Maclagan}

\address{Mathematics Institute\\
University of Warwick\\
Coventry, CV4 7AL\\
United Kingdom
}

\email{D.Maclagan@warwick.ac.uk}

\maketitle

\section{Introduction}

One reason for the recent success of tropical geometry is that
tropical varieties are easier to understand than classical varieties.
This is largely because they are discrete, combinatorial objects
having the structure of a polyhedral complex.  The purpose of these
expository notes is to give the Gr\"obner perspective on the origin of
this polyhedral complex structure.  We review the basic definitions of
tropical geometry in the rest of this section, before stating the main
theorems in the next section.  The last section is devoted to the
proofs of these theorems, some of which are new.

We begin by setting notation.  Throughout the paper we work with a fixed field
$K$ with a nontrivial valuation $\val : K^* \rightarrow \mathbb R$.
We denote by $R$ the valuation ring of $K$: $R = \{a \in K : \val(a)
\geq 0\}$.  The ring $R$ is a local ring with maximal ideal
$\mathfrak{m} = \{ a \in K : \val(a) >0 \}$ and residue field $\K =
R/\mathfrak{m}$.  For $a \in R$ we denote by $\overline{a}$ the image
of $a$ in $\K$.  We denote by $\Gamma \subseteq \mathbb R$ the image
of the valuation $\val$.  If $\Gamma \neq \{0\}$ then we assume $1 \in
\Gamma$ as this can be guaranteed by replacing $\val$ by a positive
multiple.

 We do not assume that $K$ is complete, but will sometimes require
 that it be algebraically closed.  Given an ideal over a field $K$
 without  a nontrivial valuation (for example, $K=\mathbb C$), we can extend scalars to work over the field of generalized power series with coefficients in $K$.

\begin{definition} \label{d:tropicalvariety}
  For $f = \sum_{u \in \mathbb
  Z^n} c_u x^u \in K[x_1^{\pm 1},\dots,x_n^{\pm 1}]$ the set
$\trop(V(f))$ is the non-linear locus of the piecewise linear function
$\trop(f)$ given by $\trop(f)(w) = \min(\val(c_u)+ w \cdot u)$.  Let $X \subseteq T^n \cong (K^*)^n$. The
tropical variety of $X$ is
$$\trop(X) = \bigcap_{f \in I(X)} \trop(V(f)),$$ where $I(X) \subseteq 
K[x_1^{\pm 1},\dots,x_n^{\pm 1}]$ is the ideal of $X$.
\end{definition}

The fundamental theorem of tropical algebraic geometry is the following:

\begin{theorem} \label{t:fundamentaltheorem}
For a variety $X \subseteq T^n \cong (K^*)^n$, where $K=\overline{K}$,
the set $\trop(X)$ equals the closure in the Euclidean topology on
$\mathbb R^n$ of the set $$\val(X) = \{ (\val(x_1),\dots, \val(x_n) ) :
x = (x_1,\dots,x_n) \in X \}.$$
\end{theorem}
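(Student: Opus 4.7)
The plan is to prove the two inclusions separately. The inclusion $\overline{\val(X)} \subseteq \trop(X)$ is immediate from the definitions: if $x \in X$ and $f = \sum c_u x^u \in I(X)$ then $f(x) = 0$ forces the minimum of the numbers $\val(c_u) + \val(x) \cdot u$ to be achieved at least twice, since otherwise a single term of strictly smallest valuation would dominate and no cancellation could occur. Hence $\val(x) \in \trop(V(f))$, and intersecting over $f \in I(X)$ yields $\val(X) \subseteq \trop(X)$. Taking the closure is legal because $\trop(X)$ is already closed, being an intersection of the closed tropical hypersurfaces $\trop(V(f))$.

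For the reverse inclusion $\trop(X) \subseteq \overline{\val(X)}$ I would pass to the initial-ideal characterization of tropicalization, which I expect to be established earlier in the paper: a weight $w \in \mathbb R^n$ lies in $\trop(X)$ if and only if $\inn_w(I(X))$ contains no monomial. Because $K$ is algebraically closed with nontrivial valuation, the value group $\Gamma$ is a nontrivial divisible subgroup of $\mathbb R$ (hence dense) and the residue field $\K$ is algebraically closed. It therefore suffices to produce a point $y \in X$ with $\val(y) = w$ for every $w \in \trop(X) \cap \Gamma^n$. For such $w$, pick $a \in (K^*)^n$ with $\val(a_i) = w_i$ and apply the change of variables $x_i \mapsto a_i x_i$, reducing to the case $w = 0$. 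Now $\inn_0(I(X))$ is a monomial-free ideal in $\K[x_1^{\pm 1}, \ldots, x_n^{\pm 1}]$, so the Nullstellensatz over $\K$ produces a point $\overline y \in (\K^*)^n$ in its vanishing locus.

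The main obstacle is then the lifting step: producing an honest $y \in X$ whose reduction is this $\overline y$. Since $R$ need not be complete or Henselian, Hensel's lemma is unavailable in this generality, and I would organise the argument by first treating the hypersurface case (Kapranov's theorem) via a Newton polygon and Puiseux-series factorisation over $\overline K$, and then inducting on the codimension of $X$. At the inductive step one intersects with a hypersurface whose tropicalisation meets $\trop(X)$ transversely at $w$, and appeals to flatness of a suitable Gr\"obner degeneration to ensure that the sliced system retains a common solution whose valuation is still $w$. Controlling non-emptiness under these successive intersections, rather than the individual lift to a single hypersurface, is the delicate point, and is where the algebraic closedness of $K$ and a generic choice of slicing hypersurface are both essential. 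If the density step at the start is too coarse for irrational $w$, one falls back on the closure operation and approximates $w$ by a sequence in $\trop(X) \cap \Gamma^n$, each of which is lifted by the procedure above.
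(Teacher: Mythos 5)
The paper does not prove this theorem; it states it and refers to \cite[Section 3.2]{TropicalBook}, so I am comparing your sketch against the argument there.

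Your easy inclusion $\overline{\val(X)}\subseteq\trop(X)$ is correct and is exactly the standard ultrametric cancellation argument. Your outline of the hard inclusion correctly identifies the right ingredients: the initial-ideal criterion $w\in\trop(X)\Leftrightarrow\inn_w(I(X))\neq\langle 1\rangle$ (equivalently, contains no monomial, since we are in a Laurent ring), density of $\Gamma$-rational points so that it suffices to lift $w\in\Gamma^n$, the translation reducing to $w=0$, the Nullstellensatz over the algebraically closed residue field $\K$, and the Newton-polygon proof of Kapranov's theorem for a single Laurent polynomial.

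Where I'd flag a real issue is the reduction from a general $X$ to the hypersurface case. You propose to ``first treat the hypersurface case and then induct on the codimension of $X$,'' with the inductive step being to \emph{intersect} $X$ with a suitably transverse hypersurface. That induction scheme does not close: slicing by a hypersurface \emph{increases} codimension (decreases dimension), so if Kapranov (codimension one) is your base case you cannot reach it by slicing a higher-codimension $X$. There are two coherent ways to set this up, and your sketch conflates them. Either (a) induct on \emph{dimension} downward, with dimension $0$ as the base case and slicing as the inductive step, in which case Kapranov is a tool used inside the inductive step rather than the base case, and the delicate point you rightly flag — arranging that $\inn_w(I+\langle g\rangle)$ still contains no monomial and that the intersection drops dimension, which needs a nonzerodivisor/flatness argument for the Gr\"obner degeneration — must be made precise; or (b) proceed as the cited reference does, by choosing a generic monomial \emph{projection} $\pi\colon T^n\to T^{d+1}$ (with $d=\dim X$) under which $\pi(X)$ is a hypersurface and $\pi|_X$ is generically finite, applying Kapranov to $\pi(X)$, and then lifting the point back to $X$ while controlling its valuation via the genericity of $\pi$. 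Route (b) is what \cite[Section 3.2]{TropicalBook} carries out, and it matches the base-case-codimension-one framing you announced; route (a) is viable but is a genuinely different argument requiring a tropical Bertini-type statement. As written, your sketch commits to neither, so the inductive machinery does not yet run.

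Minor remarks: the Nullstellensatz step needs $\K$ algebraically closed, which indeed follows from $K=\overline K$ but deserves to be said; and in the fallback-to-density paragraph note that approximating an irrational $w\in\trop(X)$ by $\Gamma$-rational points \emph{of} $\trop(X)$ (not merely of $\Gamma^n$) is what is required, which is possible precisely because $\trop(X)$ carries the $\Gamma$-rational polyhedral structure established later in the paper.
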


See, for example, \cite[Section 3.2]{TropicalBook} for a proof.
Theorem~\ref{t:fundamentaltheorem} gives a second interpretation of
the tropical variety $\trop(X)$ as a ``combinatorial
shadow'' of the variety $X$.  We now describe a third way to
understand it, which uses the theory of Gr\"obner
bases.

We now assume that there exists a splitting of the valuation.  This is
a group homomorphism $\Gamma \rightarrow K^*$ sending $w \in \Gamma$
to $t^w \in K^*$ with $\val(t^w)=w$.  If $K$ is the field of Puiseux
series $\PuiseuxC$ with coefficients in $\mathbb C$, we may take the
splitting that sends $w \in \mathbb Q$ to $t^w \in \PuiseuxC$.  If
$K=\mathbb Q_p$, we may take the splitting that sends $w \in \mathbb Z$
to $p^w$.  If $K$ is algebraically closed, then such a splitting
always exists; see \cite[Lemma 2.1.13]{TropicalBook}.

\begin{definition}
Fix $w \in \Gamma^n$.  For a polynomial $f = \sum_{u \in \mathbb Z^n}
c_u x^u \in K[x_1^{\pm 1},\dots,x_n^{\pm 1}]$, let $W = \trop(f)(w):=\min(\val(c_u)
+ w \cdot u)$.  We set 
\begin{align*} \inn_w(f) & = \overline{t^{-W}f(t^{w_1}x_1,\dots,t^{w_n}x_n) }\\
&  = \sum_{u \in
  \mathbb Z^n} \overline{t^{w \cdot u -W} c_u} x^u \\
 & = \sum_{\val(c_u)+w
  \cdot u = W} \overline{t^{-\val(c_u)}c_u} x^u \in \K[x_1^{\pm
    1},\dots,x_n^{\pm 1}].\\ \end{align*}
\end{definition}

\begin{example}
Let $f = 6x^2 +5xy+7y^2 \in \mathbb Q[x^{\pm 1},y^{\pm 1}]$, where
$\val$ is the $2$-adic valuation on $\mathbb Q$.  For $w = (1,2)$, we
have $W = \min(3,3,4)=3$, so
\begin{align*}
\inn_w(f) & = \overline{1/8(6(2x)^2+5(2x)(4y)+7(4y)^2)} \\
& = \overline{3x^2+5xy+14y^2} \\
& =x^2+xy \in \mathbb Z/2\mathbb Z[x^{\pm 1}, y^{\pm 1}].
\end{align*}
\end{example}

\begin{definition}
Let $I$ be an ideal in $K[x_1^{\pm 1},\dots,x_n^{\pm 1}]$.  The initial ideal of $I$ is 
$$\inn_w(I) = \langle \inn_w(f) : f \in I \rangle \subseteq
\K[x_1^{\pm 1},\dots,x_n^{\pm 1}].$$ A subset $\{g_1,\dots,g_r\}$ of
$I$ is a Gr\"obner basis for $I$ with respect to $w$ if $\inn_w(I) =
\langle \inn_w(g_1),\dots,\inn_w(g_r) \rangle$.
\end{definition}

This generalizes the notion of Gr\"obner bases for ideals in a
polynomial ring with no valuations considered.  An excellent
elementary reference for that case is \cite{CLO}.  As in that
situation, a generating set for $I$ need not be a Gr\"obner basis.

\begin{example}
Let $I=\langle x+2y, x+4z \rangle \subseteq \mathbb Q[x^{\pm 1},y^{\pm
    1},z^{\pm 1}]$, where $\mathbb Q$ has the $2$-adic valuation.  For
$w=(1,1,1)$, we have $\inn_w(I) = \langle x,y \rangle \subseteq
\mathbb Z/2\mathbb Z[x^{\pm 1},y^{\pm 1}, z^{\pm 1}]$, even though
$\inn_w(x+2y)=\inn_w(x+4z)=x$.
\end{example}

\begin{remark}
For $f \in K[x_1^{\pm 1},\dots,x_n^{\pm 1}]$, the non-linear locus of
the function $\trop(f)$ is the locus where the minimum is achieved at
least twice, and thus is the closure of the collection of $w$ for
which $\inn_w(f)$ is not a monomial.  This means that, if $\Gamma$ is
dense in $\mathbb R$, $\trop(X)$ is the closure of those $w \in
\Gamma^n$ for which $\inn_w(I(X)) \neq \langle 1 \rangle$.
\end{remark}

\section{Gr\"obner complex}

In this section we develop the theory of the Gr\"obner complex of an
ideal, which leads to a polyhedral structure on $\trop(X)$.  The
Gr\"obner complex generalizes the Gr\"obner fan \cite{BayerMorrison},
\cite{MoraRobbiano} from standard Gr\"obner theory.  It was first
described in the thesis of Speyer \cite{SpeyerThesis}.  In this
section we restrict to the case that $I$ is a homogeneous ideal in the
(non-Laurent) polynomial ring $K[x_0,\dots,x_n]$.  We assume that
$\Gamma=\im \val$ is a dense subset of $\mathbb R$ containing $\mathbb
Q$.  This follows from the assumption that $1 \in \Gamma$ if $K$ is
algebraically closed.  If $I$ is defined over a field with a trivial
valuation, choose $K$ to be any extension field with a nontrivial
valuation, and consider $I\otimes K$; the results do not depend on the
choice of $K$.  For $w \in \Gamma^{n+1}$, the initial form $\inn_w(f)$
of a polynomial $f \in K[x_0,\dots,x_n]$ is defined as in the Laurent
polynomial case: $\inn_w(f) =
\overline{t^{-\trop(f)(w)}f(t^{w_1}x_1,\dots,t^{w_n}x_n)}$.  The
initial ideal of an ideal is similarly the ideal generated by all
initial forms of polynomials in the ideal.

\begin{definition}
Fix $w \in \Gamma^{n+1}$.  Define
$$C_I[w]= \{w' \in \Gamma^{n+1} : \inn_{w'}(I)=\inn_w(I) \}.$$ We
denote by $\overline{C_I[w]}$ the closure of $C_I[w]$ in the usual
Euclidean topology on $\mathbb R^{n+1}$.
\end{definition}

\begin{example}
Let $f = 3x+8y+6z \in \mathbb Q[x,y,z]$, where $\mathbb Q$ has the
$3$-adic valuation, and let $I= \langle f \rangle$.  Fix $w =
(1,1,1)$.  Then $\trop(f)(w)=\min(2,1,2)=1$, so $\inn_w(f) =
\overline{1/3(9x+24y+18z)} = 2y \in \mathbb Z/3\mathbb Z[x,y,z]$.  Then 
\begin{align*}
C_I[w] & = \{w' \in \Gamma^3 : \inn_{w'}(I) = \langle y \rangle \}\\
& = \{ w' \in \Gamma^3 : w'_1+1  > w_2', w_3'+1 > w_2' \}.
\end{align*}
The closure $\overline{C_I[w]}$ is then $\{w' \in \mathbb R^3 : w_1'+1
\geq w_2, w_3'+1 \geq w_2'\}$.  To visualize this, we note that if
$w'$ lies in $\overline{C_I[w]}$, then so does $w'+\lambda(1,1,1)$ for
any $\lambda \in \mathbb R$, so we may quotient by the span of
$(1,1,1)$ to draw pictures.  The region $\overline{C_I[w]}$ is the
shaded region on the left of Figure~\ref{f:CI[w]}, where we have
chosen the representatives for cosets in $\mathbb R^3/\mathbb
R(1,1,1)$ with last coordinate zero.

\begin{figure}
\center{\resizebox{11cm}{!}{\input{GrobnerComplex.pdftex_t}}}
\caption{\label{f:CI[w]}}
\end{figure}

The picture on the right of Figure~\ref{f:CI[w]} shows the other possible initial ideals of $I$, and the corresponding regions $\overline{C_I[w]}$.
\end{example}

\begin{remark} \label{r:lineality}
Note that if $I$ is a homogeneous ideal in $K[x_0,\dots,x_n]$, then
$\inn_{w+\lambda \mathbf{1}}(I) = \inn_w(I)$ for any $\lambda \in
\mathbb R$, where $\mathbf{1} = (1,\dots, 1)$. 
\end{remark}

Recall that a polyhedral complex is a collection of polyhedra which
contains all faces of any polyhedron in the collection and for which
the intersection of any two polyhedra is either empty or a common
face.  The key result of this section, which is proved in the
following section, is that there are only finitely many of the sets
$\overline{C_I[w]}$ as $w$ varies over $\Gamma^{n+1}$ and these sets
are polyhedra that fit together to form a polyhedral complex.

Every polyhedron in $\mathbb R^{n+1}$ can be written in the form $P=
\{ x \in \mathbb R^{n+1} : A x \leq b \}$ where $A$ is an $s \times
(n+1)$ matrix and $b \in \mathbb R^s$.  We say that $P$ is
$\Gamma$-rational if the entries of $A$ are rational and $b \in
\Gamma^s$.  This means that all facet normals of $P$ are vectors in
$\mathbb Q^{n+1}$ and all vertices of $P$ are elements of $\Gamma^{n+1}$.  A
polyhedral complex $\Sigma$ is $\Gamma$-rational if all polyhedra in
$\Sigma$ are $\Gamma$-rational.

\begin{theorem} \label{t:Grobnercomplex}
Fix a homogeneous ideal $I \subseteq K[x_0,\dots,x_n]$.  Then 
$\{ \overline{C_I[w]} : w \in \Gamma^{n+1} \}$ forms a finite $\Gamma$-rational polyhedral complex.
\end{theorem}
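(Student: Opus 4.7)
The plan is to describe each cell $\overline{C_I[w]}$ as an explicit $\Gamma$-rational polyhedron cut out by a Gr\"obner basis of $I$ at $w$, deduce the polyhedral complex axioms from this description, and then reduce finiteness to a classical Gr\"obner fan. First I would produce, for each $w \in \Gamma^{n+1}$, a finite Gr\"obner basis of $I$ with respect to $w$: since $\K[x_0,\ldots,x_n]$ is Noetherian, $\inn_w(I)$ is finitely generated, and lifting generators (using density of $\Gamma$ in $\mathbb R$ and the surjection $R \twoheadrightarrow \K$) yields a finite $G = \{g_1,\ldots,g_s\} \subseteq I$ with $\inn_w(I) = \langle \inn_w(g_1),\ldots,\inn_w(g_s) \rangle$. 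Writing $g_i = \sum_u c_u^{(i)} x^u$ and $A_i = \supp(\inn_w(g_i))$, the relevant polyhedron is
$$P_G := \bigcap_{i} \bigcap_{\substack{u \in \supp(g_i) \\ u' \in A_i}} \{w' \in \mathbb R^{n+1} : \val(c_u^{(i)}) + w'\cdot u \ge \val(c_{u'}^{(i)}) + w'\cdot u'\}.$$
Each defining inequality has integer normal $u - u'$ and right-hand side in $\Gamma$, so $P_G$ is $\Gamma$-rational, and the claim to be proved is $\overline{C_I[w]} = P_G$.

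The substantive direction is that for $w' \in \relint(P_G) \cap \Gamma^{n+1}$ one has $\inn_{w'}(I) = \inn_w(I)$. On $\relint(P_G)$ one has $\inn_{w'}(g_i) = \alpha_i \inn_w(g_i)$ for some $\alpha_i \in \K^*$, so $\inn_{w'}(I) \supseteq \inn_w(I)$ is immediate; the reverse containment requires $G$ to remain a Gr\"obner basis after perturbation, and this is what I expect to be the technical core. I would establish it via an $S$-pair criterion adapted to the valuation setting: the vanishing of the standard-representation remainder of each $S$-polynomial of pairs in $G$ is preserved under perturbations that leave each active support $A_i$ unchanged. The polyhedral complex axioms follow from the explicit description. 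Each face of $P_G$ is realized as $P_{G'}$ for a Gr\"obner basis $G'$ associated to a weight $w'$ in the relative interior of that face (if $\inn_{w'}(I)$ degenerates further, refine $G$ by a secondary weight vector), which gives face-closure, and $\overline{C_I[w_1]} \cap \overline{C_I[w_2]} = \overline{C_I[w]}$ for any $w$ in the intersection is then a common face by the same face characterization.

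Finiteness is the main obstacle, and I would address it by a two-step reduction. First, for each $w$ choose a generic classical weight $v$ so that $\inn_v(\inn_w(I)) \subseteq \K[x_0,\ldots,x_n]$ is a monomial ideal; all such monomial ideals share the Hilbert function of $I$, so Macaulay's theorem combined with Dickson's lemma yields only finitely many. Second, for each such monomial ideal $M$, bound the preimage $\{\inn_w(I) : \inn_v(\inn_w(I)) = M\}$ by reducing to a classical Gr\"obner fan computation on a homogenization of $I$ over $\K[t,x_0,\ldots,x_n]$ (with $t$ a uniformizer), whose fan is finite by Bayer--Morrison and Mora--Robbiano and whose cones restrict to the desired cells $\overline{C_I[w]}$.
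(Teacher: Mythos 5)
Your approach is a genuinely different route from the paper's, and it has several gaps; the finiteness argument in particular appears to break in the paper's generality.

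The paper does not build up cell by cell. It constructs, for each degree $d$ up to a global bound $D$, a polynomial $g_d$ whose coefficients are the Pl\"ucker coordinates (maximal minors) of the coefficient matrix $A_d$ of a basis of $I_d$, takes $g = \prod g_d$, and shows (Theorem~\ref{t:tropicalGrobnerComplex}) that the polyhedral complex $\Sigma_{\trop(g)}$ on which $\trop(g)$ is piecewise linear has its maximal cells equal to the $\overline{C_I[w]}$ with $\inn_w(I)$ monomial; the non-monomial cells are then realized as faces via Lemma~\ref{l:monomial1} and Corollary~\ref{l:initialofinitial}. Finiteness, $\Gamma$-rationality, and the complex axioms are all automatic from this construction, because $\Sigma_{\trop(g)}$ is manifestly a finite $\Gamma$-rational polyhedral complex from the start. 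The only input needed is the finiteness of monomial initial ideals (Proposition~\ref{p:finite}, via \cite{MaclaganAntichains}) to get the degree bound $D$, and the Hilbert function invariance (Lemma~\ref{l:Hilbertfunction}) to force the containments in Corollary~\ref{l:initialofinitial} to be equalities.

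There are three concrete problems with your plan. First, the finiteness step 2 does not work in the paper's setting: you write ``with $t$ a uniformizer,'' but the paper assumes $\Gamma$ is dense in $\mathbb{R}$, so there is no uniformizer, and more fundamentally $I$ lives over $K$, not over $\K[t]$, so ``a homogenization of $I$ over $\K[t,x_0,\dots,x_n]$'' is not defined for a general valued field $K$. This reduction would only make sense for $K$ of a very special shape (e.g.\ $K = \K(t)$ with the $t$-adic valuation), which is precisely what the Gr\"obner complex is meant to generalize beyond. Second, the key claim $\overline{C_I[w]} = P_G$ requires showing that $G$ remains a Gr\"obner basis across all of $\relint(P_G)$, and you propose an $S$-pair criterion adapted to valuations with the assertion that ``vanishing of the standard-representation remainder \dots is preserved under perturbations that leave each active support $A_i$ unchanged.'' This is not obvious: the division algorithm's intermediate comparisons involve monomials outside the $A_i$, and those comparisons can flip as $w'$ moves within $P_G$. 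The paper sidesteps the division algorithm entirely by proving that $\inn_{w'}(I)$ always has the same Hilbert function as $I$ (Lemma~\ref{l:Hilbertfunction}), so the containment $\inn_w(I) \subseteq \inn_{w'}(I)$ is forced to be equality; this is cleaner and avoids needing a Buchberger-type theorem for valued coefficients. (Also note you need $G$ to be suitably reduced --- generators with $c_{iv} \neq 0$ only for $x^v \notin \inn_{w'}(I)$ --- otherwise $P_G$ is cut out too small; the paper makes this assumption explicit.) Third, the verification that the cells fit together into a complex is asserted rather than proved: ``each face of $P_G$ is realized as $P_{G'}$'' needs an argument, and in the paper this is where the global $\Sigma_{\trop(g)}$ construction does real work, since a regular subdivision is a polyhedral complex for free.

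Your step 1 (finitely many monomial initial ideals because they share the Hilbert function of $I$) is essentially right, though ``Macaulay combined with Dickson'' is imprecise; the paper instead applies the antichain theorem of \cite{MaclaganAntichains} together with Lemma~\ref{l:Hilbertfunction}, which you should compare.
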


The polyhedral complex of Theorem~\ref{t:Grobnercomplex} is called the
{\em Gr\"obner complex}.  In the case that the residue field $\K$ is a
subfield of $K$, and $I$ is defined over $\K$ (such as when $I
\subseteq \mathbb C[x_0,\dots,x_n]$, where it is standard to take $K =
\PuiseuxC$), the Gr\"obner complex is a rational polyhedral fan, which
is known as the Gr\"obner fan.  This is well studied in the usual
Gr\"obner literature; see \cite{MoraRobbiano} or \cite{BayerMorrison}
for the original works, or \cite[Chapter 2]{GBCP} or \cite[Chapter
  2]{IndiaNotes} for expositions.  The Gr\"obner complex appears in
Speyer's thesis~\cite{SpeyerThesis}, though our exposition is
different.

The lineality space of a polyhedral complex $\Sigma$ is the largest
subspace $L$ for which if $u \in \sigma$ for any $\sigma \in \Sigma$,
and $l \in L$, then $u +l \in \sigma$.  Remark~\ref{r:lineality} thus
says that $\mathbb R \mathbf{1}$ is in the lineality space of the
Gr\"obner complex, so we can draw it in $\mathbb R^{n+1}/\mathbb R
\mathbf{1}$. 

\begin{example}
Let $I = \langle y^2z-x^3-x^2z-p^4z^3 \rangle \subseteq \mathbb
Q[x,y,z]$, where $\mathbb Q$ has the $p$-adic valuation for some prime
$p$.  For $f=y^2z-x^3-x^2z-p^4z^3$, we have $\trop(f) =
\min(2y+z,3x,2x+z,3z+4)$.  The Gr\"obner complex is illustrated in
Figure~\ref{f:GrobnercomplexMattExample}.

\begin{figure}
\center{\resizebox{6cm}{!}{\input{MattEg.pdftex_t}}}
\caption{\label{f:GrobnercomplexMattExample}}
\end{figure}

\end{example}

The relevance of Theorem~\ref{t:Grobnercomplex} in the tropical
context is that it gives the structure of a polyhedral complex to
$\trop(X)$.  

Given an ideal $I \subset K[x_1^{\pm 1}, \dots,x_n^{\pm 1}]$, we
denote by $I^h \in K[x_0,\dots,x_n]$ the homogenization of $I \cap
K[x_1,\dots,x_n]$.  This is the ideal $I^h = \langle \tilde{f} : f\in
I \cap K[x_1,\dots,x_n] \rangle$, where $\tilde{f} =
x_0^{\deg(f)}f(x_1/x_0,\dots,x_n/x_0)$ is the homogenization of $f$.
 The support of a polyhedral complex $\Sigma \subseteq
\mathbb R^{n+1}$ is the collection of vectors $w \in \mathbb R^{n+1}$
with $w \in \sigma$ for some $\sigma \in \Sigma$.

\begin{corollary}
Let $X$ be a subvariety of $T^n$.  Then there is a finite $\Gamma$-rational
polyhedral complex $\Sigma$ whose support $|\Sigma|$ equals
$\trop(X)$.
\end{corollary}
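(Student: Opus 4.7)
The plan is to reduce to Theorem~\ref{t:Grobnercomplex} applied to the homogenization $I^h \subseteq K[x_0,\dots,x_n]$ of $I = I(X)$. Apply that theorem to obtain a finite $\Gamma$-rational polyhedral complex $\Sigma^h$ in $\mathbb R^{n+1}$ whose cells are the sets $\overline{C_{I^h}[w']}$ for $w' \in \Gamma^{n+1}$. By Remark~\ref{r:lineality}, $\mathbb R \mathbf{1}$ lies in the lineality space of $\Sigma^h$, so intersecting each cell with the hyperplane $H = \{w' \in \mathbb R^{n+1} : w'_0 = 0\}$ yields a finite $\Gamma$-rational polyhedral complex in $H$; identify $H$ with $\mathbb R^n$ by dropping the first coordinate.

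Next I would match up initial ideals. Given $w \in \Gamma^n$, I would show that $\inn_w(I)$ (in the Laurent ring $\K[x_1^{\pm 1},\dots,x_n^{\pm 1}]$) is obtained from $\inn_{(0,w)}(I^h) \subseteq \K[x_0,\dots,x_n]$ by setting $x_0 = 1$ and passing to the Laurent extension. In particular, $\inn_w(I) = \langle 1 \rangle$ if and only if $\inn_{(0,w)}(I^h)$ contains a monomial of the form $x_0^{a_0} x_1^{a_1} \cdots x_n^{a_n}$. This property depends only on the initial ideal $\inn_{(0,w)}(I^h)$, hence is constant on the relative interior of each cell of $\Sigma^h$.

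I would then define $\Sigma$ to be the subcomplex of $\Sigma^h \cap H$ consisting of those cells $\overline{C_{I^h}[(0,w)]} \cap H$ whose associated initial ideal $\inn_{(0,w)}(I^h)$ contains no monomial, together with all their faces. To see that this is closed under taking faces one checks that if a cell $\tau$ is a face of $\sigma$, then $\inn_{w_\tau}(I^h) \supseteq \inn_{w_\sigma}(I^h)$ for $w_\sigma \in \relint \sigma$ and $w_\tau \in \relint \tau$; so if $\sigma$'s initial ideal contains no monomial, neither does $\tau$'s. Hence $\Sigma$ is a finite $\Gamma$-rational polyhedral complex. Finally, $|\Sigma| = \trop(X)$ follows from the remark at the end of Section~1: $\trop(X)$ is the Euclidean closure of $\{w \in \Gamma^n : \inn_w(I) \neq \langle 1 \rangle\}$, which by the correspondence above equals the closure of the union of relative interiors of cells in $\Sigma$, which is $|\Sigma|$ itself (since $\Gamma$ is dense in $\mathbb R$, and relative interiors of $\Gamma$-rational polyhedra contain $\Gamma$-rational points densely).

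The main obstacle is the dehomogenization/initial-ideal correspondence in the second paragraph: one must carefully verify that initial forms commute with homogenization and with setting $x_0 = 1$, using a Gr\"obner basis for $I^h$ with respect to $(0,w)$ and the standard fact that such a basis, dehomogenized, is a Gr\"obner basis for $I \cap K[x_1,\dots,x_n]$ with respect to $w$. Once this dictionary between monomials in $\inn_{(0,w)}(I^h)$ and the unit ideal in the Laurent setting is established, the rest of the argument is a formal assembly of Theorem~\ref{t:Grobnercomplex}, the lineality remark, and the density of $\Gamma$.
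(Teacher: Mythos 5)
Your overall plan matches the paper's proof: homogenize, apply Theorem~\ref{t:Grobnercomplex} to $I^h$, slice by $w_0 = 0$ using Remark~\ref{r:lineality}, translate $\inn_w(I) = \langle 1 \rangle$ into $\inn_{(0,w)}(I^h)$ containing a monomial, and then take the subcomplex on which the initial ideal is monomial-free. The one substantive step you flag — verifying that the collection of monomial-free cells is closed under taking faces — is exactly where the paper also has to do work, and that is where your argument goes wrong.

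You assert that if $\tau$ is a face of $\sigma$, then $\inn_{w_\tau}(I^h) \supseteq \inn_{w_\sigma}(I^h)$. This containment is false: by Lemma~\ref{l:Hilbertfunction} every initial ideal of $I^h$ has the same Hilbert function as $I^h$, so any inclusion between two of them forces equality, contradicting $\tau \neq \sigma$. Moreover, even granting the containment, your deduction does not follow — a larger ideal containing a monomial is perfectly compatible with a smaller one containing none, so ``$\inn_{w_\sigma}$ monomial-free $\Rightarrow$ $\inn_{w_\tau}$ monomial-free'' would not be a consequence. The correct relation between the two cells is not a containment but an iterated-initial-ideal statement: choosing $\mathbf v$ so that $w_\tau + \epsilon \mathbf v$ lies in $\relint \sigma$ for small $\epsilon > 0$, Corollary~\ref{l:initialofinitial} gives $\inn_{w_\sigma}(I^h) = \inn_{\mathbf v}\bigl(\inn_{w_\tau}(I^h)\bigr)$. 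From this, any monomial $x^u \in \inn_{w_\tau}(I^h)$ satisfies $\inn_{\mathbf v}(x^u) = x^u \in \inn_{w_\sigma}(I^h)$, so a monomial in the face's ideal survives into the larger cell's ideal; the contrapositive is precisely the closure-under-faces property you need. Replacing your false containment with this appeal to Corollary~\ref{l:initialofinitial} repairs the proof and brings it in line with the paper's argument.
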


\begin{proof}
Let $I=I(X) \subseteq K[x_1^{\pm 1},\dots,x_n^{\pm 1}]$ be the ideal
of polynomials vanishing on $X$, and let $I^h$ be its homogenization.
It is straightforward to check that for $w \in \Gamma^n$ we have
$\inn_{(0,w)}(I^h)|_{x_0=1} = \inn_w(I)$, where the equality is as
ideals in $\K[x_1^{\pm 1},\dots,x_n^{\pm 1}]$; see \cite[Proposition
  2.6.2]{TropicalBook} for details.  Thus $\inn_w(I) = \langle 1
\rangle$ if and only if $\inn_{(0,w)}(I^h) \subseteq
\K[x_0,\dots,x_n]$ contains a monomial.  Let $\Sigma$ be the subset of
the Gr\"obner complex defined by $\{ \overline{C_{I^h}[(0,w)]} :
\inn_{(0,w)}(I^h) \text{ does not contain a monomial} \}$.  This is a
subset of a $\Gamma$-rational polyhedral complex, so the slice $w_0=0$
is also a $\Gamma$-rational polyhedral complex.  Since the polyhedra
in $\Sigma$ intersect correctly, to show that $\Sigma \cap \{w_0=0\} =
\trop(X)$, it only remains to check that if $w' \in
\overline{C_{I^h}[(0,w)]} \setminus C_{I^h}[(0,w)]$, then
$\inn_{w'}(I^h)$ also contains no monomials.  This follows from
Corollary~\ref{l:initialofinitial} in the next section, as if $w' \in
\overline{C_{I^h}[(0,w)]}$ then there is $\mathbf{v} \in \Gamma^{n+1}$
for which $w'+\epsilon \mathbf{v} \in C_{I^h}[(0,w)]$ for all
$\epsilon$ sufficiently small.  Therefore $\inn_{(0,w)}(I^h) =
\inn_{\mathbf{v}}(\inn_{w'}(I^h))$ is an initial ideal of
$\inn_{w'}(I^h)$ by Corollary~\ref{l:initialofinitial}.  This means that  if
$\inn_{w'}(I^h)$ contains a monomial then so does $\inn_{(0,w)}(I^h)$.
Thus if $\overline{C_{I^h}[(0,w)]} \in \Sigma$, we also have
$\overline{C_{I^h}[w']} \in \Sigma$ as required.
\end{proof}

A drawback of the definition of a tropical variety given in
Definition~\ref{d:tropicalvariety} is that a priori it requires taking
the intersection over infinitely many tropical hypersurfaces
$\trop(V(f))$.  A second tropical consequence of
Theorem~\ref{t:Grobnercomplex} is that this infinite intersection is
in fact a finite intersection.

\begin{definition}
Let $I \subseteq K[x_1^{\pm 1},\dots,x_n^{\pm 1}]$ be an ideal.  A collection $\{ f_1,\dots,f_r \} \subseteq I$ is a {\em tropical basis} for $I$ if 
$$\trop(V(I)) = \bigcap_{i=1}^r \trop(V(f_i)),$$ and $I = \langle
f_1,\dots,f_r \rangle$.
\end{definition}

\begin{theorem} \label{t:tropicalbasis}
Let $I \subseteq K[x_1^{\pm 1},\dots,x_n^{\pm 1}]$ be an ideal.  Then
a tropical basis for $I$ always exists.
\end{theorem}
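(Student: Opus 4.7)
The plan is to start from any finite generating set of $I$ and iteratively enlarge it by adjoining elements of $I$ whose tropical hypersurfaces exclude points in the current intersection that do not lie in $\trop(V(I))$.  Termination will be forced by the finiteness of the Gr\"obner complex of $I^h$ established in Theorem~\ref{t:Grobnercomplex}.

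Let $g_1,\dots,g_s$ be any finite generating set for $I$, which exists by Noetherianity of $K[x_1^{\pm 1},\dots,x_n^{\pm 1}]$.  Set $T_0 := \bigcap_{i=1}^s \trop(V(g_i)) \supseteq \trop(V(I))$; this is a finite $\Gamma$-rational polyhedral complex, as the intersection of finitely many tropical hypersurfaces.  The essential auxiliary lemma I would record is: for $w \in \Gamma^n$ with $\inn_w(I) = \langle 1 \rangle$ in $\K[x_1^{\pm 1},\dots,x_n^{\pm 1}]$, there exists a single $f \in I$ with $\inn_w(f)$ a monomial.  One proves this by starting from a representation $1 = \sum c_i \inn_w(g_i)$ in $\K[x_1^{\pm 1},\dots,x_n^{\pm 1}]$ with $g_i \in I$, lifting the $c_i$ to $C_i \in K[x_1^{\pm 1},\dots,x_n^{\pm 1}]$ with coefficients of valuation zero, and performing a standard Gr\"obner-style reduction on $\sum C_i g_i$ (by subtracting elements of $I$ of strictly larger $\trop(\cdot)(w)$) to remove unwanted tail terms.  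Combined with the remark at the end of Section~1, this lemma says exactly that $w \notin \trop(V(I))$ if and only if some $f \in I$ has $\inn_w(f)$ a monomial.

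The iteration proceeds as follows: if $T_k = \trop(V(I))$, stop; otherwise, by density of $\Gamma$ in $\mathbb{R}$, choose $w \in (T_k \setminus \trop(V(I))) \cap \Gamma^n$, use the lemma to produce $f_{k+1} \in I$ with $\inn_w(f_{k+1})$ a monomial (so $w \notin \trop(V(f_{k+1}))$), and set $T_{k+1} := T_k \cap \trop(V(f_{k+1})) \subsetneq T_k$.  The enlarged set $\{g_1,\dots,g_s,f_1,\dots,f_{k+1}\}$ remains a generating set of $I$.

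The main obstacle, and the part I expect to be most subtle, is termination: $T_0$ need not be compact, so no purely topological argument suffices.  For this I would invoke Theorem~\ref{t:Grobnercomplex}: the restriction $\Sigma$ of the Gr\"obner complex of $I^h$ to $\{w_0 = 0\}$ is a finite polyhedral complex on whose cells $\inn_w(I)$ is locally constant, and by the preceding corollary the cells of $\Sigma$ split into finitely many ``good'' ones (whose union is $\trop(V(I))$) and finitely many ``bad'' ones on which $\inn_w(I) = \langle 1 \rangle$.  The plan is to argue that only finitely many iterations are needed by a combinatorial bookkeeping over the bad cells: each iteration can be arranged to strictly reduce a well-founded invariant measuring how much of the bad cells is still contained in $T_k$, using that every $\trop(V(f_{k+1}))$ is itself a finite polyhedral complex.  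Making this invariant precise (so as to verify its strict decrease at each step despite the possible unboundedness of the cells involved) is the main technical task of the proof.
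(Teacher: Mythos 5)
Your iterative plan has the right intuition — enlarge the generating set cell by cell of the Gr\"obner complex of $I^h$ — but it contains a genuine gap precisely at the point you flag as "the main technical task," and that gap is not a minor loose end: it is the heart of the theorem. If at step $k$ you pick $w$ in a bad cell $\sigma$ and choose some $f_{k+1}\in I$ with $\inn_w(f_{k+1})$ a monomial, you are only guaranteed to remove a relatively open neighborhood of $w$ from $T_k$; the tropical hypersurface $\trop(V(f_{k+1}))$ may well slice through $\sigma$, leaving $T_{k+1}\cap\sigma$ still full-dimensional. So "number of bad cells meeting $T_k$" need not decrease, dimension need not decrease, and I do not see how to cook up a well-founded invariant that does decrease without further constraining the choice of $f_{k+1}$. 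Once you constrain it enough to remove the whole cell at once, you have effectively reproved what the paper does directly, and the iteration is no longer doing any work.

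The paper's proof avoids the iteration entirely: because the Gr\"obner complex is finite, it suffices to produce, for each bad cell (one with $\inn_{w^{(i)}}(I^h)$ containing a monomial), a single polynomial $g^{(i)}\in I^h$ such that $\inn_w(g^{(i)})$ is a monomial \emph{for every} $w$ with $\inn_w(I^h)=\inn_{w^{(i)}}(I^h)$, i.e.\ uniformly over the whole cell. This is the key idea missing from your proposal. Constructing such a $g^{(i)}$ is not automatic: the paper passes to a monomial initial ideal $\inn_{\mathbf{v}_i}(\inn_{w^{(i)}}(I^h))$ via Lemma~\ref{l:monomial1} and Corollary~\ref{l:initialofinitial}, and then uses Lemma~\ref{l:Hilbertfunction} to write $g^{(i)}=x^{u_i}+\sum c_{ai}x^a$ with every tail monomial $x^a$ lying \emph{outside} that monomial ideal. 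A short Gr\"obner argument then shows $\inn_w(g^{(i)})=x^{u_i}$ for all $w$ in the cell, because any surviving tail term would force a monomial of $\inn_w(I^h)$ to appear outside $\inn_{\mathbf{v}_i}(\inn_w(I^h))$, a contradiction. Taking a finite generating set of $I$ together with the dehomogenizations $g^{(i)}|_{x_0=1}$ for the finitely many bad cells gives the tropical basis in one step. Your auxiliary lemma (existence of some $f$ with $\inn_w(f)$ a monomial when $\inn_w(I)=\langle 1\rangle$) is correct and in the right spirit, but it gives control only at the single point $w$; you need the stronger, cell-uniform version, and that is exactly what Lemma~\ref{l:Hilbertfunction} supplies.
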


\begin{proof}
The Gr\"obner complex $\Sigma(I)$ of $I^h$ is a polyhedral complex in
$\,\mathbb{R}^{n+1}$ with lineality space containing $\mathbb R
\mathbf{1}$.  For each of the finitely many polyhedra $\sigma^{(i)}$
in that complex, we select one representative vector $w^{(i)} \in
\Gamma^{n+1}$.  For each index $i$ such that $\,\inn_{w^{(i)}}(I^h)$
contains a monomial we select a polynomial $f^{(i)} \in I^h$ such that
$\inn_{w^{(i)}}(f^{(i)})$ is a monomial $x^{u_i}$.  Choose
$\mathbf{v}_i$ with $\inn_{\mathbf{v}_i}(\inn_{w^{(i)}}(I^h))$ a
monomial ideal; this is possible by Lemma~\ref{l:monomial1}.  By
Corollary~\ref{l:initialofinitial} we can find $\epsilon>0$ such that
$\inn_{\mathbf{v}_i}(\inn_{w^{(i)}}(I^h)) = \inn_{w^{(i)}+\epsilon
  \mathbf{v}_i}(I^h)$.  By Lemma~\ref{l:Hilbertfunction} there is a
polynomial $g^{(i)} \in I$ of the form $x^{u_i} + \sum c_{ai}x^a$,
where $c_{ai} \neq 0$ implies that $x^a \not \in
\inn_{\mathbf{v}_i}(\inn_{w^{(i)}}(I^h))$.  Then for every $w \in
\Gamma^{n+1}$ with $\inn_w(I^h)=\inn_{w^{(i)}}(I^h)$ we claim that
$\inn_w(g^{(i)})= x^{u_i}$.  Indeed,
$\inn_{\mathbf{v}_i}(\inn_w(g^{(i)})) \in
\inn_{\mathbf{v}_i}(\inn_{w^{(i)}}(I^h))$, and every monomial occurring
in this polynomial must occur in $g^{(i)}$, but also be in the
monomial ideal $\inn_{\mathbf{v}_i}(\inn_{w^{(i)}}(I^h))$, so must be
$x^{u_i}$.  Thus $\inn_w(g^{(i)}) = x^{u_i}+ \sum b_a x^a$ where $x^a
\not \in \inn_{\mathbf{v}_i}(\inn_{w}(I^h))$.  Since $x^{u_i} \in
\inn_w(I^h)$, this means that $\sum b_a x^a \in \inn_w(I^h)$, and thus
$\inn_{\mathbf{v}_i}(\sum b_a x^a ) \in
\inn_{\mathbf{v}_i}(\inn_{w}(I^h))$, which would contradict $x^a \not
\in \inn_{\mathbf{v}_i}(\inn_{w}(I^h))$ unless $\sum b_a x^a = 0$.
Thus $\inn_{w}(g^{(i)})=x^{u_i}$.

Now we define a tropical basis $\mathcal{T}$ by taking any finite
generating set of $I$ and augmenting it by the polynomials
$g_i=g^{(i)}|_{x_0=1}$, where $g^{(i)}$ is as constructed above. Then
$\mathcal{T}$ is a generating set of $I$.  The intersection
$\bigcap_{f \in \mathcal T} \trop(V(f))$ contains $\trop(V(I))$ by the
definition of $\trop(V(I))$.  Consider an arbitrary weight vector $w
\in \Gamma^n \setminus \trop(V(I))$.  There exists an index $i$ such
that $\inn_{(0,w)}(I^h) = \inn_{w^{(i)}}(I^h)$, and this initial ideal
must contain a monomial since $w \not \in \trop(V(I))$.  The above
argument then shows that $\inn_{(0,w)}(g^{(i)})=x^{u_i}$, so $w \not \in
\trop(V(g_i))$.  Thus $w \not \in \bigcap_{f \in \mathcal T}
\trop(V(f))$ and so $\mathcal T$ is a finite tropical basis as
required.
\end{proof}

\begin{remark}
Hept and Theobald show in \cite{HeptTheobald} that if $X \subseteq
T^n$ is an irreducible $d$-dimensional variety, then there always
exist $f_0,\dots,f_{n-d} \in I(X)$ with $\trop(X) =
\bigcap_{i=0}^{n-d} \trop(V(f_i))$.  This means that if we drop the
ideal generation requirement then a tropical basis with $n-d+1$ elements
always exists.   Note, however, that the degrees of
the $f_i$ may be very large.  There are classical complete
intersections that are not the intersection of the tropicalizations
of any generating set of cardinality the codimension.

Alessandrini and Nesci give in \cite{AlessandriniNesci} a uniform
bound on the degrees of polynomials $f_i$ in a tropical basis for an
ideal $I$ that depends only on the Hilbert polynomial of a
homogenization of $I$.  Thus we can bound either the size, or the
degrees, of elements of a tropical basis.  However at the time of
writing a truly effective and efficient algorithm to compute tropical
bases does not exist.
\end{remark}

\begin{remark}  \label{r:noncanonical}
We warn that the polyhedral complex structure constructed here on
$\trop(X)$ is not canonical, but depends on the choice of
embedding of $T^n$ into $\mathbb P^n$ (or, algebraically, on the
choice of coordinates for the Laurent polynomial ring).  As an
explicit example of this phenomenon, let
$I=\langle a+b+c+d+e,3b+5c+7d+11e \rangle \subseteq \mathbb C[a^{\pm
    1},b^{\pm 1},c^{\pm 1}, d^{\pm 1}, e^{\pm 1}]$, and consider the
plane $X=V(I) \subseteq (\mathbb C^*)^5$.  The Gr\"obner fan of
$I$ has a one-dimensional
lineality space, spanned by $(1,1,1,1,1)$.  Modulo the lineality
space, the Gr\"obner fan structure on the tropical variety of $X$ has five
rays, and ten two-dimensional cones, which are the span any two of the
rays.  Let $\phi^* : \mathbb C[a^{\pm 1},b^{\pm 1},c^{\pm 1}, d^{\pm
    1},e^{\pm 1}] \rightarrow \mathbb C[a^{\pm 1},b^{\pm 1},c^{\pm 1},
  d^{\pm 1},e^{\pm 1}]$ be the automorphism given by $\phi^*(a)=ab$,
$\phi^*(b)=bc$, $\phi^*(c)=cd$, $\phi^*(d)=de$ and $\phi^*(e)=e$, and
let $\phi : (\mathbb C^*)^5 \rightarrow (\mathbb C^*)^5$ be the corresponding
morphism.  Let $Y = \phi(X) = V( {\phi^*}^{-1}(I))$.  The set $\trop(Y)$
is the image of $\trop(X)$ under the change of coordinates given by
$\trop(\phi^{-1})$, but the Gr\"obner fan structure on $\trop(Y)$ has
seven rays and twelve cones, as two of the two-dimensional cones are subdivided.  This can be verified using the software {\tt gfan}~\cite{gfan}.

A possible objection to this example is that the polyhedral structure
on $\trop(Y)$ refines the polyhedral structure on $\trop(X)$, so that
there is a more natural polyhedral structure.  However such a coarsest
polyhedral structure does not always exist; see \cite[Example
  5.2]{SturmfelsTevelev}.
\end{remark}

\begin{remark} \label{r:splitting}
Our construction of initial ideals depends on the choice of a
splitting $w \mapsto t^w$ of the valuation map $\val : K^* \rightarrow
\mathbb R$.  This is necessary to be able to compare initial ideals
with respect to different choices of $w$, as this choice makes our
initial ideals into ideals in $\K[x_1^{\pm 1},\dots,x_n^{\pm 1}]$ or
$\K[x_0,\dots,x_n]$.  

The more invariant choice recognizes that the Laurent polynomial ring
is the group ring $K[M]$, where $M \cong \mathbb Z^n$ is a lattice
with dual lattice $N = \Hom(M,\mathbb Z)$, and $\val(X)$ more
naturally lives in $N \otimes \mathbb R$, since $T^n \cong N \otimes
K^*$.  We then consider the tilted group ring $R[M]^w = \{ f = \sum
c_u x^u : \val(c_u)+w \cdot u \geq 0 \}$, which contains the ideal
$\mathfrak{m} = \{ f = \sum c_u x^u \in R[M]^w : \val(c_u)+w \cdot u
>0 \}$.  We can then define $\inn_w(I) = (I \cap R[M]^w)+\mathfrak{m}
\in R[M]^w/\mathfrak{m}$.  See \cite{PayneFiber} for this approach.

We note, though, that the choice of splitting is not a very serious
one.  Suppose $\phi_1, \phi_2 : \Gamma \rightarrow K^*$ are two
different splittings of $\val$, so $\val \circ \phi_1 = \val \circ
\phi_2 = \mathrm{id} : \Gamma \rightarrow \Gamma$.  These
homomorphisms induce isomorphisms $\phi_j : K[M] \rightarrow K[M]$ by
$x_i \mapsto \phi_j(w_i) x_i$ for $j=1,2$, which restrict to
isomorphisms $\phi_j : R[M]^w \rightarrow R[M]$ as we have
$\phi_j(\sum c_u x^u)=\sum c_u \phi_j(w \cdot u) x^u$.  Then if 
$\val(c_u)+w \cdot u \geq 0$, we have $\val(c_u \phi_j(w \cdot u) )
\geq 0$.  Thus $\psi = \phi_1 \circ \phi_2^{-1} : R[M] \rightarrow
R[M]$ is an automorphism.  Since $\psi$ is the restriction of the
automorphism of $K[M]$ given by $x_i \mapsto \phi_1(w_i)/\phi_2(w_i)
x_i$, $\psi$ maps the ideal $\mathfrak{m}$ to itself, so induces an
automorphism $\overline{\psi}: \K[M] \rightarrow \K[M]$.

This means that the two initial ideals of $I$ with respect to $w$
obtained using the splittings $\phi_1$ and $\phi_2$ are related by the
automorphism $\overline{\psi}$, so all invariants of the initial
ideal, such as dimension, are independent of the choice of splitting.
We also emphasize that such a choice is necessary to do computations.
One can view (parts of) tropical geometry as the computational arm of
rigid analytic geometry and Berkovich theory, so it is important not
to ignore the computational aspects.
\end{remark}

\section{Proofs}

This section contains the technical details needed to prove
Theorem~\ref{t:Grobnercomplex}.

\begin{lemma} \label{l:initial1}
For all $f \in K[x_0,\ldots,x_n]$ there exists $\epsilon>0$ such that
$\,\inn_v(\inn_w(f)) = \inn_{w+\epsilon' v}(f)\,$ for all $\epsilon'
\in \Gamma$ with $0 < \epsilon'<\epsilon$.
\end{lemma}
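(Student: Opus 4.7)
The plan is to argue by direct inspection of the finite set of monomials of $f$. Write $f = \sum_{u \in S} c_u x^u$ with $S = \{u : c_u \neq 0\}$ finite; set $W = \trop(f)(w) = \min_{u \in S}(\val(c_u) + w \cdot u)$ and $T = \{u \in S : \val(c_u) + w \cdot u = W\}$, the support of $\inn_w(f)$. Since the nonzero coefficients of $\inn_w(f)$ lie in $\K$ and therefore carry trivial valuation, $\inn_v(\inn_w(f))$ is obtained by restricting to the indices $T' = \{u \in T : v \cdot u = V\}$, where $V = \min_{u \in T} v \cdot u$.

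I would then compare this with a direct computation of $\inn_{w+\epsilon' v}(f)$. For each $u \in S$ the relevant quantity is
\[
\val(c_u) + (w + \epsilon' v)\cdot u = \bigl(\val(c_u) + w \cdot u\bigr) + \epsilon' \, v \cdot u.
\]
On $u \in T$ this equals $W + \epsilon' \, v \cdot u$, and so is minimized exactly on $T'$ with common value $W + \epsilon' V$. On $u \in S \setminus T$ there is a strictly positive gap $\delta_u := \val(c_u) + w \cdot u - W$, and keeping the value strictly above $W + \epsilon' V$ amounts to $\epsilon'(V - v \cdot u) < \delta_u$, which is automatic when $v \cdot u \geq V$ and otherwise reduces to $\epsilon' < \delta_u/(V - v\cdot u)$.

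Since $S$ is finite, I can take $\epsilon$ to be the minimum of the finitely many positive ratios $\delta_u/(V - v\cdot u)$ that arise (or $+\infty$ if none do). Then for every $\epsilon' \in \Gamma$ with $0 < \epsilon' < \epsilon$, the minimum of $\val(c_u) + (w + \epsilon' v)\cdot u$ equals $W + \epsilon' V$ and is attained precisely on $T'$; moreover the coefficient of $x^u$ for $u \in T'$ is $\overline{t^{-\val(c_u)}c_u}$, exactly as in $\inn_v(\inn_w(f))$, yielding the desired equality. The argument is essentially a bookkeeping exercise, and the only point needing care is the strictness of the inequalities: this is guaranteed by $\delta_u > 0$ for $u \in S \setminus T$ and by $v \cdot u > V$ for $u \in T \setminus T'$, both of which are insensitive to small perturbations because $S$ is finite. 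I do not foresee a substantive obstacle.
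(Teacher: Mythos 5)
Your argument is correct and is essentially the same as the paper's: both identify the support $T$ of $\inn_w(f)$, then the sub-support $T'$ where $v\cdot u$ is minimal, and show that for small enough $\epsilon'$ the minimum of $\val(c_u)+(w+\epsilon' v)\cdot u$ over the support of $f$ is attained exactly on $T'$. The only difference is that you make the threshold $\epsilon$ explicit via the ratios $\delta_u/(V-v\cdot u)$, whereas the paper simply asserts "for all sufficiently small $\epsilon>0$."
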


\begin{proof}

Let $f=\sum_{u \in {\mathbb N}^{n+1}} c_u x^u$.  Then
$\inn_w(f) \,\,= \sum_{u \in \mathbb N^{n+1} } \overline{c_u t^{w \cdot u -
W}} x^u$,
where $W = 
\trop(f)(w)$.  Let $W' = \min( v \cdot u : \val(c_u)+w \cdot u = W )$.
Then
$$ \inn_v(\inn_w(f)) \,\,= \sum_{v \cdot u = W'} \overline{c_u t^{w \cdot
u - W}} x^u.$$ 
For all sufficiently small $\epsilon > 0$, we have
$\, W+\epsilon W' = \trop(f)(w+\epsilon v) \,$ and 
 $$\{ u : \val(c_u) + (w + \epsilon v) \cdot u
=  W+\epsilon W' \} = \{u : \val(c_u) + w \cdot u = W, v \cdot u = W' \}.$$ 
This implies that $\inn_{w+\epsilon' v}(f) = \inn_v(\inn_w(f))\,$
for all $\epsilon' \in \Gamma$ with $0 < \epsilon'<\epsilon$.
\end{proof}

\begin{comment}

\begin{proof}
Conversely, the ideal $\inn_{w+\epsilon' v}(I)$ is finitely
generated by $h_1,\dots,h_r \in \K[x_0,\dots,x_n]$, with $h_i =
\inn_{w + \epsilon' v}(f'_i)$ for some $f'_i \in I$.
Then $ \inn_{w+\epsilon'v}(f'_i) = \inn_v(\inn_w(f'_i))  \in
\inn_v(\inn_w(I))$, and we conclude
$\inn_{w+\epsilon'v}(I) \subseteq \inn_v(\inn_w(I))$.

We now prove the lemma for a single polynomial $f =
\sum_{u \in {\mathbb N}^{n+1}} c_u x^u$.
Then 
$$\inn_w(f) \,\,= \sum_{u \in \mathbb N^{n+1} } \overline{c_u t^{w \cdot u -
W}} x^u,$$ 
where $W = 
\trop(f)(w)$.  Let $W' = \min( v \cdot u : \val(c_u)+w \cdot u = W )$.
Then
$$ \inn_v(\inn_w(f)) \,\,= \sum_{v \cdot u = W'} \overline{c_u t^{w \cdot
u - W}} x^u.$$ 
For all sufficiently small $\epsilon > 0$, we have
$\, W+\epsilon W' = \trop(f)(w+\epsilon v) \,$ and 
 $$\{ u : \val(c_u) + (w + \epsilon' v) \cdot u
=  W+\epsilon W' \} = \{u : \val(c_u) + w \cdot u = W, v \cdot u = W' \}.$$ 
This implies $\,\inn_{w+\epsilon' v}(f) = \inn_v(\inn_w(f))\,$
for all $\epsilon' \in \Gamma$ with $0 < \epsilon'<\epsilon$.
\end{proof}

\end{comment}

In standard Gr\"obner basis theory most attention is paid to initial ideals
that have a monomial generating set.  Such monomial ideals are useful
because their properties only depend on the set of monomials in the
ideal.  For example, a polynomial $f = \sum c_u x^u$ lies in a
monomial ideal if and only if every $x^u$ with $c_u \neq 0$ lies in
the ideal.  We next check that in this modified Gr\"obner theory
monomial initial ideals still exist.

\begin{lemma}  \label{l:monomial1}
Let $I$ be a homogeneous ideal in $K[x_0,\dots,x_n]$, and fix $w \in
\Gamma^{n+1}$.  Then there is $\mathbf{v} \in \mathbb Q^{n+1}$ and
$\epsilon >0$ for which both $\inn_{\mathbf{v}}(\inn_w(I))$ and
$\inn_{w+\epsilon \mathbf{v}}(I)$ are monomial ideals, and
$\inn_{\mathbf{v}}(\inn_w(I)) \subseteq \inn_{w+\epsilon
  \mathbf{v}}(I)$.
\end{lemma}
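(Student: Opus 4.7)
The plan is to bootstrap off the classical Gröbner theory applied to $\inn_w(I)$, which lives in $\K[x_0,\dots,x_n]$ and thus sits in the valuation-free setting, and then to transfer the result back to $I$ itself via Lemma \ref{l:initial1}.

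First I would produce $\mathbf{v}$. Let $J := \inn_w(I) \subseteq \K[x_0,\dots,x_n]$. Since $\K$ carries the trivial valuation, classical Gröbner theory applies: pick any term order $\prec$, compute a reduced Gröbner basis $\{g_1,\dots,g_r\}$ of $J$ with respect to $\prec$, and let $x^{u_1},\dots,x^{u_r}$ be the corresponding leading monomials. The finitely many strict inequalities $\mathbf{v} \cdot u_i < \mathbf{v} \cdot u$, ranging over non-leading monomials $x^u$ appearing in the $g_i$, cut out a non-empty open rational cone in $\mathbb{R}^{n+1}$; choose any rational $\mathbf{v}$ in this cone. Then $\inn_{\mathbf{v}}(g_i) = x^{u_i}$, and because $\{g_i\}$ is a Gröbner basis, $\inn_{\mathbf{v}}(J) = \langle x^{u_1},\dots,x^{u_r}\rangle$ is a monomial ideal, giving one of the two ideals required by the statement.

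Second I would transfer to $I$. For each $i$, lift $g_i$ to $F_i \in I$ satisfying $\inn_w(F_i) = g_i$. Concretely, write $g_i = \sum_k p_{ik}\,\inn_w(f_{ik})$ with $p_{ik} \in \K[x]$ and $f_{ik} \in I$, lift each $p_{ik}$ to $\tilde{p}_{ik} \in R[x]$ of trivial $w$-weight using the splitting $w \mapsto t^w$, and set $F_i = \sum_k \tilde{p}_{ik} f_{ik} \in I$, adjusting the lifts as needed so that $\inn_w(F_i) = g_i$ on the nose. Applying Lemma \ref{l:initial1} to each of the finitely many $F_i$ then yields a common $\epsilon > 0$ with $\inn_{w+\epsilon \mathbf{v}}(F_i) = \inn_{\mathbf{v}}(\inn_w(F_i)) = \inn_{\mathbf{v}}(g_i) = x^{u_i}$, so each generator $x^{u_i}$ of $\inn_{\mathbf{v}}(\inn_w(I))$ lies in $\inn_{w+\epsilon \mathbf{v}}(I)$. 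This gives the containment $\inn_{\mathbf{v}}(\inn_w(I)) \subseteq \inn_{w+\epsilon \mathbf{v}}(I)$. To deduce that $\inn_{w+\epsilon \mathbf{v}}(I)$ is itself monomial I would compare Hilbert functions via Lemma \ref{l:Hilbertfunction}: both $\inn_{w+\epsilon \mathbf{v}}(I)$ and $\inn_{\mathbf{v}}(\inn_w(I))$ share the Hilbert function of $I$, so the containment is forced to be an equality.

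The chief obstacle is the lifting step. A naive coordinatewise lift of $g_i = \sum_k p_{ik}\,\inn_w(f_{ik})$ to $F_i = \sum_k \tilde{p}_{ik} f_{ik}$ can produce cancellation in the $w$-minimum part, raising $\trop(F_i)(w)$ above $\trop(g_i)(w)$ and destroying the identity $\inn_w(F_i) = g_i$. Handling this cleanly essentially requires a Gröbner-style normal-form argument over the valuation ring $R$, of the type developed in \cite{SpeyerThesis}. Once that lifting is in hand, the remaining ingredients are purely routine: classical Gröbner theory over $\K$, the single-polynomial Lemma \ref{l:initial1}, and Hilbert function preservation.
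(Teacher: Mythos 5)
Your approach is genuinely different from the paper's. The paper proves this lemma by a Noetherian maximality argument: among all $\mathbf{v}\in\mathbb Q^{n+1}$, it chooses one for which the monomial subideal $M_{\mathbf{v}}$ of $\inn_{\mathbf{v}}(\inn_w(I))$ is maximal; if $\inn_{\mathbf{v}}(\inn_w(I))$ were not itself monomial, a perturbation $\mathbf{v}+\epsilon'\mathbf{v}'$ would strictly enlarge $M_{\mathbf{v}}$, a contradiction, and a second maximality argument of the same flavour handles $\inn_{w+\epsilon\mathbf{v}}(I)$. That proof uses only Lemma~\ref{l:initial1} and the Noetherianity of the polynomial ring, and in particular never touches Hilbert functions.

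The gap in your proposal is a circular dependence. You invoke Lemma~\ref{l:Hilbertfunction} to upgrade the containment $\inn_{\mathbf{v}}(\inn_w(I))\subseteq\inn_{w+\epsilon\mathbf{v}}(I)$ to an equality, but in the paper Lemma~\ref{l:Hilbertfunction} is proved \emph{using} Lemma~\ref{l:monomial1}: in the case where $\inn_w(I)_d$ is not spanned by the monomials it contains, that proof appeals to Lemma~\ref{l:monomial1} to reduce to a monomial initial ideal. So you may not cite Lemma~\ref{l:Hilbertfunction} here. The Hilbert-function identity $\dim_{\K}\inn_{w'}(I)_d=\dim_K I_d$ for arbitrary $w'$ does admit an independent proof (for instance, $\{f\in I_d:\trop(f)(w')\ge 0\}$ is a free $R$-module of rank $\dim_K I_d$ because $R$ is a valuation ring, and $\inn_{w'}(I)_d$ is its reduction modulo $\mathfrak m$), but you would need to supply such an argument in place of the citation. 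By contrast, the obstacle you flag --- that a coordinatewise lift of $g\in\inn_w(I)$ to $F\in I$ with $\inn_w(F)=g$ might fail --- is not actually a problem in the homogeneous setting, and no Speyer-style normal-form machinery is needed. Since $x^a\inn_w(f)=\inn_w(x^af)$, any nonzero homogeneous $g\in\inn_w(I)_d$ can be written as $g=\sum_k a_k\inn_w(f_k)$ with $a_k\in\K^*$, $f_k\in I_d$, and $\trop(f_k)(w)=0$; lifting each $a_k$ to a unit $\tilde a_k\in R^*$ and setting $F=\sum_k\tilde a_k f_k\in I_d$, one checks that for every $u$ the coefficient of $x^u$ in $g$ equals $\overline{t^{w\cdot u}\cdot(\text{coefficient of } x^u \text{ in } F)}$, so $g\neq 0$ forces $\trop(F)(w)=0$ and $\inn_w(F)=g$. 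The real defect is the circularity, not the lift.
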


Note that in Corollary~\ref{l:initialofinitial} we will show that for sufficiently small $\epsilon>0$ these two initial ideals are equal.

\begin{proof}
Given any $\mathbf{v} \in \mathbb Q^{n+1}$, let $M_{\mathbf{v}}$
denote the ideal generated by all monomials in
$\inn_{\mathbf{v}}(\inn_w(I))$, and let $M^{\epsilon}_{\mathbf{v}}$
denote the ideal generated by all monomials in $\inn_{w+\epsilon
  \mathbf{v}}(I)$ for some $\epsilon>0$.  Choose $\mathbf{v} \in
\mathbb Q^{n+1}$ for which $M_{\mathbf{v}}$ is maximal with respect to
inclusion, so there is no $\mathbf{v}' \in \mathbb Q^{n+1}$ with
$M_{\mathbf{v}} \subsetneq M_{\mathbf{v}'}$.  This is possible since
the polynomial ring is Noetherian.  If $\inn_{\mathbf{v}}(\inn_w(I))$
is not a monomial ideal, then there is $f \in I$ with none of the
terms of $\inn_{\mathbf{v}}(\inn_w(f))$ lying in $M_{\mathbf{v}}$.
Choose $\mathbf{v}' \in \mathbb Q^{n+1}$ with $\inn_{\mathbf{v}'}
(\inn_{\mathbf{v}}(\inn_w(f)))$ a monomial; any $\mathbf{v}'$ for
which the face of the Newton polytope of
$\inn_{\mathbf{v}}(\inn_w(f))$ is a vertex suffices.  By
Lemma~\ref{l:initial1} there is $\epsilon'>0$ for which
$\inn_{\mathbf{v}+\epsilon' \mathbf{v}'}(\inn_w(f))$ is this monomial.
By choosing $\epsilon'$ sufficiently small we can guarantee that
$\inn_{\mathbf{v}+\epsilon' \mathbf{v}'}(\inn_w(I))$ contains all
generators of $M_{\mathbf{v}}$, as any generator $x^u$ is
$\inn_{\mathbf{v}}(\inn_w(f))$ for some $f \in I$ so this follows from
applying Lemma~\ref{l:initial1} to $\inn_{w}(f)$.  This contradicts
the choice of $\mathbf{v}$, so we conclude that
$\inn_{\mathbf{v}}(\inn_w(I))= M_{\mathbf{v}}$.

  Choose $f_1,\dots,f_s$ for which $\inn_{\mathbf{v}}(\inn_{w}(f_i)) =
  x^{u_i}$, where the $x^{u_i}$ generate $M_{\mathbf{v}}$.  By
  Lemma~\ref{l:initial1} there is $\epsilon >0$ for which $\inn_{w +
    \epsilon \mathbf{v}}(f_i)=x^{u_i}$ for all $i$, so for this
  $\epsilon$ we have $\inn_{\mathbf{v}}(\inn_w(I)) \subseteq \inn_{w +
    \epsilon \mathbf{v}}(I)$.  Suppose that $\mathbf{v}$ has been
  chosen from those $\mathbf{v}'$ with $\inn_{\mathbf{v}'}(\inn_w(I))$
  monomial so $M_{\mathbf{v}}^{\epsilon}$ is maximal with respect to
  inclusion.  Again, if $\inn_{w + \epsilon \mathbf{v}}(I)$ is not
  monomial then there is $f \in I$ with no term of $\inn_{w + \epsilon
    \mathbf{v}}(f)$ in $M_{\mathbf{v}}^{\epsilon}$, and we can choose
  $\mathbf{v}'$ and $\epsilon'$ as above so that
  $M_{\mathbf{v}}^{\epsilon} \subsetneq M_{\mathbf{v}+\epsilon'
    \mathbf{v}'}^{\epsilon}$ and
  $M_{\mathbf{v}}=M_{\mathbf{v}+\epsilon' \mathbf{v}'}$.  From this
  contradiction we conclude that $\inn_{w+\epsilon \mathbf{v}}(I)$ is
  also monomial, so we have constructed the desired $\mathbf{v} \in
  \mathbb Q^{n+1}$.
\end{proof}

We denote by $S_K$ the polynomial ring $K[x_0,\dots,x_n]$, and by
$S_{\K}$ the polynomial ring $\K[x_0,\dots,x_n]$.

\begin{lemma} \label{l:Hilbertfunction}
Let $I \subseteq K[x_0,\dots,x_n]$ be a homogeneous ideal, and let $w
\in \Gamma^{n+1}$ be such that $\inn_w(I)_d$ is the span of $\{ x^u : x^u \in \inn_w(I)_d \}$.   Then
the monomials not in $\inn_{w}(I)$ of degree $d$ form a $K$-basis for
$(S/I)_d$.  This implies that for arbitrary $w \in \Gamma^{n+1}$
 the Hilbert function of $I$ and $\inn_w(I)$ agree:
$$\dim_{K}(S_K/I)_d = \dim_{\K}(S_{\K}/\inn_w(I))_d \text{ for all degrees
}d.$$
\end{lemma}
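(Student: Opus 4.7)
The plan is to first establish the $K$-basis claim for those $w$ satisfying the monomial hypothesis, then deduce the Hilbert function equality for arbitrary $w$ via Lemma~\ref{l:monomial1} and Corollary~\ref{l:initialofinitial}. Since $\dim_K (S_K)_d$ equals $|\{x^u \in \inn_w(I)_d\}| + |\{\text{standard monomials of degree } d\}|$, the exact sequence $0 \to I_d \to (S_K)_d \to (S_K/I)_d \to 0$ reduces the basis claim to two complementary dimension bounds: $K$-linear independence of the standard monomials in $(S_K/I)_d$, and $\dim_K I_d \geq \dim_{\K} \inn_w(I)_d$. Linear independence is immediate from the hypothesis: if a nontrivial combination $f = \sum_{x^u \notin \inn_w(I)} c_u x^u$ were to lie in $I$, then $\inn_w(f) \in \inn_w(I)_d$ would be a nonzero $\K$-combination of monomials lying outside $\inn_w(I)$ (each coefficient $\overline{c_u t^{-\val(c_u)}}$ is a unit in $\K$), contradicting that $\inn_w(I)_d$ is monomial.

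For the lower bound, I would fix a monomial $\K$-basis $x^{u_1},\ldots,x^{u_s}$ of $\inn_w(I)_d$ and construct $F_i \in I_d$ with $\inn_w(F_i) = x^{u_i}$. By definition of $\inn_w(I)$, each $x^{u_i}$ has an expansion $x^{u_i} = \sum_j \bar{g}_{i,j}\,\inn_w(h_{i,j})$ with $\bar{g}_{i,j} \in \K[x_0,\ldots,x_n]$ and $h_{i,j} \in I$ homogeneous of complementary degrees. Lift each $\bar{g}_{i,j}$ to $g_{i,j} \in K[x_0,\ldots,x_n]$ by taking $R$-lifts of valuation zero for the $\K$-coefficients and multiplying each term by a suitable power of $t$ (using the splitting $\Gamma \to K^*$) so that every term attains the same tropical weight; then $\inn_w(g_{i,j}) = \bar{g}_{i,j}$ and the common value $\trop(g_{i,j})(w)$ may be prescribed arbitrarily. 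Choose these values so that $\trop(g_{i,j})(w) + \trop(h_{i,j})(w)$ is independent of $j$, and set $F_i := \sum_j g_{i,j} h_{i,j} \in I_d$. Multiplicativity of $\inn_w$ on factors with nonzero initial form yields $\inn_w(g_{i,j} h_{i,j}) = \bar{g}_{i,j} \inn_w(h_{i,j})$, and since these summands share a common tropical weight and sum to the nonzero polynomial $x^{u_i}$, no cancellation occurs and $\inn_w(F_i) = x^{u_i}$. The $K$-linear independence of $\{F_i\}$ then follows from $\K$-linear independence of $\{x^{u_i}\}$: if $\sum c_i F_i = 0$ with some $c_i \neq 0$, letting $m := \min_i (\val(c_i) + \trop(F_i)(w))$ and collecting the weight-$m$ leading terms gives $\sum_{\val(c_i)+\trop(F_i)(w)=m} \overline{c_i t^{-\val(c_i)}}\, x^{u_i} = 0$ in $(S_\K)_d$; since each $\overline{c_i t^{-\val(c_i)}}$ is a unit in $\K$, this contradicts independence of the $x^{u_i}$. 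Combined with independence, this forces both bounds to be equalities, proving the basis claim.

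For arbitrary $w \in \Gamma^{n+1}$, Lemma~\ref{l:monomial1} produces $\mathbf{v} \in \mathbb Q^{n+1}$ and $\epsilon > 0$ with both $\inn_\mathbf{v}(\inn_w(I))$ and $\inn_{w+\epsilon \mathbf{v}}(I)$ monomial, and shrinking $\epsilon$ using Corollary~\ref{l:initialofinitial} forces the two ideals to coincide. Applying the first part to $(I, w+\epsilon \mathbf{v})$ over $K$ and to $(\inn_w(I), \mathbf{v})$ over $\K$ (the argument goes through unchanged with the trivial valuation) expresses both $\dim_K (S_K/I)_d$ and $\dim_{\K}(S_\K/\inn_w(I))_d$ as the number of degree-$d$ monomials outside this common monomial ideal, giving the Hilbert function equality. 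The main obstacle will be the $F_i$ construction above: because $\inn_w$ is not $K$-linear, aligning the tropical weights of the various $g_{i,j} h_{i,j}$ and certifying that no cancellation at the common weight inflates $\trop(F_i)(w)$ above the intended value requires careful bookkeeping with the splitting and the multiplicativity of $\inn_w$.
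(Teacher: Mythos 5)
Your monomial-case argument is essentially correct and in fact supplies a detail the paper passes over silently. The paper's proof simply asserts ``for all monomials $x^u \in \inn_w(I)_d$, choose polynomials $f_u \in I_d$ with $\inn_w(f_u)=x^u$'' without explaining why such elements exist; your $F_i$ construction via lifting generators along the splitting and aligning tropical weights fills this in, and the worry in your last sentence is unfounded: once the $g_{i,j}h_{i,j}$ are arranged to share tropical weight $W$, the images of $t^{-W}g_{i,j}h_{i,j}$ in $(S_{\K})_d$ sum to $x^{u_i} \neq 0$, which already forces $\trop(F_i)(w) = W$ (no cancellation up to weight $W$) and $\inn_w(F_i) = x^{u_i}$. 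Your leading-term argument for $K$-linear independence of the $F_i$ is also correct, and is if anything cleaner than the paper's, which normalizes each $f_u$ to have coefficient $1$ on $x^u$ and then analyses coefficients of a single monomial.

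The genuine gap is in the reduction of arbitrary $w$ to the monomial case. You invoke Corollary~\ref{l:initialofinitial} to conclude $\inn_{\mathbf{v}}(\inn_w(I)) = \inn_{w+\epsilon\mathbf{v}}(I)$, but Corollary~\ref{l:initialofinitial} is proved in the paper \emph{using} Lemma~\ref{l:Hilbertfunction}: the containment there comes from Lemma~\ref{l:initial1}, and it is precisely the Hilbert-function equality of the present lemma that upgrades it to an equality. So your appeal is circular. Lemma~\ref{l:monomial1} by itself only gives the one-sided containment $\inn_{\mathbf{v}}(\inn_w(I))_d \subseteq \inn_{w+\epsilon\mathbf{v}}(I)_d$. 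The paper closes the gap without the corollary: by the already-proved monomial case applied to $w+\epsilon\mathbf{v}$, the monomials outside $\inn_{w+\epsilon\mathbf{v}}(I)_d$ span $(S_K/I)_d$, so any $x^u$ in $\inn_{w+\epsilon\mathbf{v}}(I)_d \setminus \inn_{\mathbf{v}}(\inn_w(I))_d$ has a representative $f_u = x^u + \sum c_v x^v \in I_d$ with all $x^v$ outside $\inn_{w+\epsilon\mathbf{v}}(I)_d$, hence outside $\inn_{\mathbf{v}}(\inn_w(I))_d$; applying $\inn_{\mathbf{v}}\circ\inn_w$ then yields an element of the monomial subspace $\inn_{\mathbf{v}}(\inn_w(I))_d$ supported entirely outside it, a contradiction. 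Replace your use of Corollary~\ref{l:initialofinitial} with this argument. A further minor point: applying ``the first part'' to $(\inn_w(I),\mathbf{v})$ over $\K$ with the trivial valuation is not literally an invocation of the lemma, since $\mathbf{v}\in\mathbb Q^{n+1}$ does not lie in the value group $\{0\}^{n+1}$; the paper instead cites standard (valuation-free) Gr\"obner theory for the equality $\dim_{\K}(S_{\K}/\inn_w(I))_d = \dim_{\K}(S_{\K}/\inn_{\mathbf{v}}(\inn_w(I)))_d$.
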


\begin{proof}
Suppose first that $\inn_{w}(I)_d$ is the span of $\{ x^u : x^u \in
\inn_w(I)_d \}$.  Let $\mathcal B_d$ be the set of monomials of degree
$d$ not contained in $\inn_w(I)$.  We first show that, regarded as
elements of $(S/I)_d$, the set $\mathcal B_d$ is linearly independent.
Indeed, if this set were linearly dependent there would exist $f =
\sum c_u x^u \in I_d$, with $x^u \not \in \inn_w(I)$ whenever $c_u \neq
0$.  But then $\inn_w(f) \in \inn_w(I)_d$, which would mean that every
term of $\inn_w(f)$ is in $\inn_w(I)_d$, contradicting the construction
of $f$.  Since $|\mathcal B_d| = { n + d \choose n} - \dim_{\K}
\inn_w(I)_d$, this linear independence implies that $\dim_{\K}
\inn_w(I)_d \geq \dim_{K} I_d$.

For all monomials $x^u \in \inn_w(I)_d$, choose polynomials $f_u \in
I_d$ with $\inn_w(f_u)=x^u$.  We next note that the collection $\{ f_u
: x^u \in \inn_w(I)_d \}$ is linearly independent.  If not, there
would exist $a_u \in K$ not all zero with $\sum a_u f_u =0$.  Write
$f_u = x^u+ \sum c_{uv} x^v$.  Let $u'$ minimize $\val(a_u)+w \cdot u$
for all $ u \in \mathbb N^{n+1}$ with $x^u \in \inn_w(I)_d$.  Then
$a_{u'}+\sum_{u \neq u'} a_u c_{uu'}=0$, so there is $u'' \neq u'$
with $\val(a_{u''})+\val(c_{u''u'}) \leq \val(a_{u'})$.  But then
$\val(a_{u''})+\val(c_{u''u'})+w \cdot u' \leq \val(a_{u'})+w \cdot u'
\leq \val(a_{u''})+ w\cdot u''$, which contradicts
$\inn_w(f_{u''})=x^{u''}$.  This shows $\dim_{K} I_d \geq \dim_{\K}
\inn_w(I)_d$.  Thus, when $\inn_w(I)$ is a monomial ideal we have
$\dim_{K} (S_K/I)_d = \dim_{\K} (S_{\K}/\inn_w(I))_d$, and $\mathcal
B_d$ is a $K$-basis for $(S_K/I)_d$.

If $\inn_w(I)_d$ is not spanned by the monomials it contains, by
Lemma~\ref{l:monomial1} there is $\mathbf{v} \in \mathbb Q^{n+1}$ and
$\epsilon>0$ for which both $\inn_{\mathbf{v}}(\inn_w(I))_d$ and
$\inn_{w + \epsilon \mathbf{v}}(I)_d$ are spanned by the monomials
they contain and $\inn_{\mathbf{v}}(\inn_w(I))_d \subseteq
\inn_{w+\epsilon \mathbf{v}}(I)_d$.  By the previous calculation the
monomials not in $\inn_{w + \epsilon \mathbf{v}}(I)_d$ span $(S/I)_d$,
so if $x^u \in \inn_{w +\epsilon \mathbf{v}}(I)_d \setminus
\inn_{\mathbf{v}}(\inn_w(I))_d$ there is $f_u \in I_d$ of the form
$f_u = x^u + \sum c_v x^v$, where $c_v \neq 0$ implies that $x^v \not
\in \inn_{w + \epsilon \mathbf{v}}(I)_d$.  But then $\inn_w(f_u)$ is
supported on monomials not in $\inn_{\mathbf{v}}(\inn_w(I))_d$, so
$\inn_{\mathbf{v}}(\inn_w(f_u)) \not \in
\inn_{\mathbf{v}}(\inn_w(I))_d$.  From this contradiction we conclude
that $\inn_{w +\epsilon \mathbf{v}}(I)_d =
\inn_{\mathbf{v}}(\inn_w(I))_d$.

Standard Gr\"obner arguments imply $\dim_{\K}
(S_{\K}/\inn_w(I))_d = \dim_{\K}
(S_{\K}/\inn_{\mathbf{v}}(\inn_w(I)))_d$, and by the previous
calculations we have $\dim_K (S_K/I)_d =
\dim_{\K}(S_{\K}/\inn_{w+\epsilon \mathbf{v}}(I))_d$, so we conclude
that for any $w \in \Gamma^{n+1}$ we have $\dim_{K}(S_K/I)_d =
\dim_{\K}(S_{\K}/\inn_w(I))_d$ for all degrees $d$.

\end{proof}

\begin{corollary} \label{l:initialofinitial}
Let $I$ be a homogeneous ideal in $K[x_0,\dots,x_n]$, and let $w,
\mathbf{v} \in \Gamma^{n+1}$.  Then there is $\epsilon>0$ such that
for all $0 < \epsilon' <\epsilon$ with $\epsilon' \in \Gamma^{n+1}$ we
have $$\inn_{\mathbf{v}}(\inn_w(I)) = \inn_{w + \epsilon   \mathbf{v}}(I).$$
\end{corollary}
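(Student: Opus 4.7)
My plan is to prove the two inclusions separately: first get $\inn_{\mathbf{v}}(\inn_w(I)) \subseteq \inn_{w+\epsilon'\mathbf{v}}(I)$ for sufficiently small $\epsilon'$ directly from Lemma~\ref{l:initial1} applied to a finite generating set, and then use the Hilbert function equality in Lemma~\ref{l:Hilbertfunction} to upgrade the inclusion to equality.

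For the first inclusion: since $\K[x_0,\dots,x_n]$ is Noetherian, the ideal $\inn_w(I)$ is finitely generated, and by restricting a given set of generators (of the form $\inn_w(f)$ for $f \in I$) to a finite one, I can choose $f_1,\dots,f_s \in I$ with $\inn_w(I) = \langle \inn_w(f_1),\dots,\inn_w(f_s) \rangle$. Applying Lemma~\ref{l:initial1} to each $f_i$ produces an $\epsilon_i > 0$ with $\inn_{\mathbf{v}}(\inn_w(f_i)) = \inn_{w+\epsilon'\mathbf{v}}(f_i)$ whenever $0 < \epsilon' < \epsilon_i$ with $\epsilon' \in \Gamma$. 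Taking $\epsilon = \min_i \epsilon_i > 0$, for every such $\epsilon'$ the ideal $\inn_{\mathbf{v}}(\inn_w(I))$ is generated by the elements $\inn_{w+\epsilon'\mathbf{v}}(f_i)$, all of which lie in $\inn_{w+\epsilon'\mathbf{v}}(I)$. This gives $\inn_{\mathbf{v}}(\inn_w(I)) \subseteq \inn_{w+\epsilon'\mathbf{v}}(I)$.

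For the reverse inclusion, I will compare Hilbert functions degree by degree. By Lemma~\ref{l:Hilbertfunction}, $\dim_K(S_K/I)_d = \dim_{\K}(S_{\K}/\inn_{w+\epsilon'\mathbf{v}}(I))_d$ for every $d$. Similarly the Hilbert function of $\inn_w(I)$ agrees with that of $I$, and then applying classical Gr\"obner theory over the residue field $\K$ (which is the trivial-valuation case of Lemma~\ref{l:Hilbertfunction}, or simply \cite{CLO}-style Gr\"obner basics) yields $\dim_{\K}(S_{\K}/\inn_w(I))_d = \dim_{\K}(S_{\K}/\inn_{\mathbf{v}}(\inn_w(I)))_d$. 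Chaining these equalities shows $\inn_{\mathbf{v}}(\inn_w(I))$ and $\inn_{w+\epsilon'\mathbf{v}}(I)$ have the same Hilbert function in every degree, so the containment from the first step forces equality in every graded piece, hence as ideals.

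The only real obstacle is justifying that $\inn_w(I)$ admits a finite generating set of the form $\{\inn_w(f_i)\}$ with $f_i \in I$, and ensuring the Hilbert function argument applies cleanly to $\inn_{\mathbf{v}}(\inn_w(I))$ even though the outer initial form is taken over $\K$ (trivially valued) rather than $K$. The first point is a routine Noetherian argument, and the second is handled either by invoking Lemma~\ref{l:Hilbertfunction} with the trivial valuation on $\K$ or by citing standard Gr\"obner theory. After these two points are dispatched, the two-step argument (inclusion, then Hilbert function equality) concludes the proof.
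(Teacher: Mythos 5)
Your two-step strategy (establish the inclusion $\inn_{\mathbf{v}}(\inn_w(I)) \subseteq \inn_{w+\epsilon'\mathbf{v}}(I)$ via Lemma~\ref{l:initial1}, then upgrade to equality by comparing Hilbert functions via Lemma~\ref{l:Hilbertfunction}) is exactly the structure of the paper's proof, and the Hilbert function step is fine. However, there is a genuine gap in how you establish the inclusion.

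You choose $f_1,\dots,f_s \in I$ so that $\inn_w(I) = \langle \inn_w(f_1),\dots,\inn_w(f_s)\rangle$, and then assert that $\inn_{\mathbf{v}}(\inn_w(I))$ is generated by the $\inn_{\mathbf{v}}(\inn_w(f_i)) = \inn_{w+\epsilon'\mathbf{v}}(f_i)$. That implication is false in general: it would require $\{\inn_w(f_1),\dots,\inn_w(f_s)\}$ to be a Gr\"obner basis for $\inn_w(I)$ with respect to $\mathbf{v}$, whereas you only arranged for it to be a generating set. The whole reason Gr\"obner bases are a nontrivial notion is that passing to initial forms of a generating set typically loses elements of the initial ideal; your argument quietly assumes this loss never happens. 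Without that, showing that each $\inn_{\mathbf{v}}(\inn_w(f_i))$ lies in $\inn_{w+\epsilon'\mathbf{v}}(I)$ does not give the containment of ideals, because you have not accounted for the other generators of $\inn_{\mathbf{v}}(\inn_w(I))$.

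The fix, which is what the paper does, is to start from a finite generating set of the \emph{double} initial ideal $\inn_{\mathbf{v}}(\inn_w(I))$ rather than of $\inn_w(I)$: by Noetherianity and the definition of initial ideal one can choose generators $g_1,\dots,g_s$ of $\inn_{\mathbf{v}}(\inn_w(I))$ with $g_i = \inn_{\mathbf{v}}(h_i)$ for some $h_i \in \inn_w(I)$, and (using that $I$ is homogeneous) each $h_i$ can be taken of the form $\inn_w(f_i)$ for some $f_i \in I$. Applying Lemma~\ref{l:initial1} to these $f_i$ then shows every generator of $\inn_{\mathbf{v}}(\inn_w(I))$ lies in $\inn_{w+\epsilon'\mathbf{v}}(I)$, which is what the inclusion actually requires. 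With that change your Hilbert function step closes the argument exactly as in the paper.
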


\begin{proof}
Let $\{g_1,\ldots,g_s\} \subset \K[x_0, \ldots, x_n]$ be a generating
set for $\inn_{\mathbf{v}}(\inn_w(I))$, with each generator $g_i$ of
the form $\inn_{\mathbf{v}}(\inn_w(f_i))$ for some $f_i \in I$. We
choose $\epsilon$ to be the minimum of the $\epsilon_i$ from
Lemma~\ref{l:initial1}.  Then $g_i=\inn_{\mathbf{v}}(\inn_w(f_i)) =
\inn_{w+\epsilon' \mathbf{v}}(f_i)$ for any $\epsilon'<\epsilon$, so
$\inn_{\mathbf{v}}(\inn_w(I)) \subseteq \inn_{w+\epsilon'
  \mathbf{v}}(I)$.  But by Lemma~\ref{l:Hilbertfunction} both
$\inn_{\mathbf{v}}(\inn_w(I))$ and $\inn_{w + \epsilon'
  \mathbf{v}}(I)$ have the same Hilbert function as $I$, so this
containment cannot be proper.
\end{proof}

\begin{proposition} \label{p:finite}
Let $I$ be a homogeneous ideal in $K[x_0,\dots,x_n]$.  There are only
a finite number of different monomial  initial ideals $\inn_w(I)$ as $w$ varies over
$\Gamma^{n+1}$.
\end{proposition}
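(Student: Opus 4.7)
The plan is to use Hilbert function invariance to reduce the proposition to a standard finiteness result about monomial ideals. By Lemma~\ref{l:Hilbertfunction}, every initial ideal $\inn_w(I)$ has the same Hilbert function as $I$, so every monomial initial ideal of $I$ is a homogeneous monomial ideal in $S_{\K}$ with Hilbert function $h = h_{S_K/I}$. The proposition is therefore equivalent to showing that only finitely many homogeneous monomial ideals of $S_{\K}$ share a given Hilbert function.

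To prove this, I would first record the useful rigidity that if $M \subseteq M'$ are two homogeneous monomial ideals with the same Hilbert function, then $M = M'$, since equal graded dimensions together with containment force $M_d = M'_d$ in every degree. I would then bound the degrees of minimal generators by invoking Gotzmann's regularity theorem: there exists a constant $D$, depending only on the Hilbert polynomial of $I$, such that every homogeneous ideal of $S_{\K}$ with Hilbert polynomial equal to that of $I$ is generated in degrees at most $D$. Since a monomial ideal is determined by its minimal monomial generators, and there are only finitely many monomials of degree at most $D$ in $S_{\K}$, only finitely many monomial ideals with Hilbert function $h$ exist.

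The main obstacle is this degree bound on generators: without such a bound, it is a priori possible for distinct monomial ideals with a common Hilbert function to have minimal generators of arbitrarily high degree. Gotzmann's theorem circumvents this cleanly and dovetails with the Hilbert function machinery already set up in Lemma~\ref{l:Hilbertfunction}. An alternative, if one wishes to avoid citing Gotzmann, is to proceed by a Noetherian ascending-chain argument applied to the ideals $J_k = M_1 + \cdots + M_k$ formed from a hypothetical infinite list of distinct monomial initial ideals, combined with the rigidity above to push each $M_k$ for $k$ large into a common stabilized monomial ideal $J_{k_0}$; supplying the resulting explicit uniform degree bound is then the remaining technical step.
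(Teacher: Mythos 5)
Your reduction to finiteness of monomial ideals with a prescribed Hilbert function is sound, and the rigidity observation (containment plus equal Hilbert function forces equality) is correct. But the theorem you invoke to obtain the degree bound is misstated, and as stated it is false: there is \emph{no} bound $D$ depending only on the Hilbert \emph{polynomial} such that every homogeneous ideal with that Hilbert polynomial is generated in degrees at most $D$. The standard counterexample is the family $\mathfrak{m}^k = \langle x_0,\dots,x_n\rangle^k$: all have Hilbert polynomial $0$, yet $\mathfrak{m}^k$ requires a minimal generator of degree $k$. Gotzmann's regularity theorem only controls the regularity of the \emph{saturation} of an ideal in terms of its Hilbert polynomial, and the initial ideals $\inn_w(I)$ need not be saturated. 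The statement you actually want is that the Hilbert \emph{function} bounds the degrees of minimal generators; this is true, but the right tool is the Bigatti--Hulett--Pardue theorem (the lex-segment ideal with a given Hilbert function has the largest graded Betti numbers), not Gotzmann. Your fallback Noetherian ascending-chain sketch also does not close the gap: the stabilized ideal $J_{k_0}=\sum_{i\le k_0} M_i$ contains every $M_k$, but a subideal of a finitely generated ideal can have minimal generators of arbitrarily high degree, so no uniform bound follows.

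The paper takes a different and more elementary route. Instead of bounding generator degrees, it invokes the finiteness of antichains of monomial ideals (\cite[Theorem 1.1]{MaclaganAntichains}): if there were infinitely many distinct monomial initial ideals, two of them would satisfy $\inn_{w_2}(I)\subsetneq\inn_{w_1}(I)$. Then, using the basis statement from Lemma~\ref{l:Hilbertfunction}, one takes $x^u\in\inn_{w_1}(I)\setminus\inn_{w_2}(I)$ and lifts it to $f_u = x^u + \sum c_v x^v \in I$ with each $x^v\notin\inn_{w_1}(I)$; since $\inn_{w_2}(I)$ is a monomial ideal, every monomial of $\inn_{w_2}(f_u)$ would have to lie in $\inn_{w_2}(I)$, yet none of $x^u$ or the $x^v$ can, a contradiction. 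This avoids any Betti-number machinery entirely. If you want to pursue your approach, replace ``Hilbert polynomial'' with ``Hilbert function'' and replace the appeal to Gotzmann with Bigatti--Hulett--Pardue; otherwise, Maclagan's antichain theorem together with Lemma~\ref{l:Hilbertfunction} gives the cleaner proof.
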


\begin{proof}
  If this were not the case, by \cite[Theorem 1.1]{MaclaganAntichains}
there would be $w_1,w_2 \in \Gamma^{n+1}$ with $\inn_{w_2}(I)
\subsetneq \inn_{w_1}(I)$, where both initial ideals are monomial
ideals.  Fix $x^u \in \inn_{w_1}(I) \setminus \inn_{w_2}(I)$.  By
Lemma~\ref{l:Hilbertfunction} the monomials of degree $\deg(x^u)$ not
in $\inn_{w_1}(I)$ form a $K$-basis for $S/I$, so there is $f_u \in I$
with $f_u = x^u + \sum c_v x^v$ where whenever $c_v \neq 0$ we have
$x^v \not \in \inn_{w_1}(I)$.  But then $\inn_{w_2}(f_u) \in
\inn_{w_2}(I)$, and since $\inn_{w_2}(f_u)$ is a monomial ideal this
means that all of its terms lie in $\inn_{w_2}(I)$.  However all
monomials appearing in $\inn_{w_2}(f_u)$ appear in $f_u$, so this is a
contradiction, and thus there are only a finite number of monomial
initial ideals of $I$.
\end{proof}

Fix a homogeneous ideal $I \subseteq K[x_0,\dots,x_n]$.
Proposition~\ref{p:finite} guarantees that there are only finitely
many different monomial initial ideals of $I$.  Let $D$ be the maximum degree
of a minimal generator of any monomial initial ideal of $I$.

For any fixed degree $d$ let $s = \dim_{K}(I_d)$.  Choose a basis
$f_1,\dots,f_s$ for $I_d$, and let $A_d$ be the corresponding $s \times
{n+d \choose n}$ matrix recording the coefficients of the polynomials
$f_i$.  This matrix has columns indexed by the monomials $\mathcal
M_d$ in $K[x_0,\dots,x_n]$ of degree $d$, so $(A_d)_{iu}$ is
the coefficient of $x^{u}$ in $f_i$.  Note that the maximal
minors of this matrix are independent of the choice $f_1,\dots,f_s$ of
basis, as they are the Pl\"ucker coordinates of the element $I_d$ in
the Grassmannian $\Gr(s, S_d)$.  For $J \subseteq \mathcal M_d$ with
$|J| = s$, we denote by $A_d^J$ the $s \times s$ minor of $A_d$
indexed by columns labeled by those monomials in $J$.

Let $g_d \in K[x_0,\dots,x_n]$ be given by
$$g_d = \sum_{I \subseteq \mathcal M_d, |I|=s} \det(A_d^I)
\prod_{\mathbf{u} \in I} x^u.$$ Let $g = \prod_{d=1}^D g_d$.  The
function $\trop(g) : \mathbb R^{n+1} \rightarrow \mathbb R$ is
piecewise-linear.  Let $\Sigma_{\trop(g)}$ be the coarsest polyhedral
complex for which $\trop(g)$ is linear on each polyhedron in
$\Sigma_{\trop(g)}$.  Note that $\Sigma_{\trop(g)}$ is a
$\Gamma$-rational polyhedral complex.

\begin{theorem} \label{t:tropicalGrobnerComplex}
Fix a homogeneous ideal $I \subseteq K[x_0,\dots,x_n]$, and let $g_d,
g$ and $\Sigma_{\trop(g)}$ be as above.  Fix $w \in \Gamma^{n+1}$ in the interior of a maximal polyhedron 
$\sigma \in \Sigma_{\trop(g)}$.   Then
$\sigma = \overline{C_{I}[w]}$.
\end{theorem}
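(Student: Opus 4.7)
The plan is to interpret the coefficients of $g_d$ as the Pl\"ucker coordinates of $I_d \in \Gr(s_d, S_d)$, and to use the Pl\"ucker embedding to translate between initial ideals and minimizing sets of $\trop(g_d)$. Concretely, if we act on a basis matrix $A_d$ for $I_d$ by $\operatorname{diag}(t^{w_0}, \ldots, t^{w_n})$ on the columns, divide by $t^{W_d}$ with $W_d = \trop(g_d)(w)$, and reduce modulo $\mathfrak m$, we obtain a matrix over $\K$ whose row span is $\inn_w(I)_d$ and whose maximal minors are $\overline{t^{-\val(\det(A_d^J))}\det(A_d^J)}$ for $J$ achieving the minimum in $\trop(g_d)(w)$ and $0$ otherwise. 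Since the Pl\"ucker embedding is injective and the nonzero minor values are intrinsic to $I$ and independent of $w$, one gets the key criterion: $\inn_w(I)_d = \inn_{w'}(I)_d$ if and only if $\trop(g_d)$ has the same set of minimizing $J$ at $w$ and at $w'$.

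For the forward inclusion $\sigma \subseteq \overline{C_I[w]}$: since $\trop(g) = \sum_{d=1}^D \trop(g_d)$ is a sum of concave piecewise-linear functions, $\Sigma_{\trop(g)}$ is the common refinement of the $\Sigma_{\trop(g_d)}$, and a maximal cell $\sigma$ of $\Sigma_{\trop(g)}$ is a full-dimensional intersection of maximal cells of the $\Sigma_{\trop(g_d)}$. For $w$ in the relative interior of $\sigma$, a single $J_d$ is the unique minimizer of $\trop(g_d)$ throughout this interior, and the same is true for any $w' \in \sigma^\circ$; by the Pl\"ucker criterion, $\inn_{w'}(I)_d = \inn_w(I)_d$ for each $d \leq D$. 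To upgrade this to equality of the full initial ideals, I would argue that every initial ideal $\inn_w(I)$ is generated in degrees $\leq D$: by Lemma~\ref{l:monomial1} some further initial ideal $\inn_{\mathbf v}(\inn_w(I))$ is monomial, its minimal generators have degree $\leq D$ by the definition of $D$, and they lift along $\inn_{\mathbf v}$ to generators of $\inn_w(I)$ of the same degrees by a standard Gr\"obner argument. Two ideals generated in degrees $\leq D$ that agree in those degrees are equal, so $\inn_{w'}(I) = \inn_w(I)$ and thus $\sigma^\circ \cap \Gamma^{n+1} \subseteq C_I[w]$; density of $\Gamma$ in $\mathbb R$ combined with $\Gamma$-rationality of $\sigma$ then yields $\sigma = \overline{\sigma^\circ \cap \Gamma^{n+1}} \subseteq \overline{C_I[w]}$.

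For the reverse inclusion $\overline{C_I[w]} \subseteq \sigma$: if $w' \in C_I[w]$, then $\inn_{w'}(I)_d = \inn_w(I)_d$ for every $d$, and in particular for each $d \leq D$. The Pl\"ucker criterion forces $w$ and $w'$ to have the same set of $\trop(g_d)$-minimizers for each $d \leq D$, so they lie in a common relatively open cell of each $\Sigma_{\trop(g_d)}$, and hence in the common relatively open cell $\sigma^\circ$ of the refinement $\Sigma_{\trop(g)}$. Therefore $C_I[w] \subseteq \sigma^\circ$, and taking closure gives $\overline{C_I[w]} \subseteq \sigma$.

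The main obstacle is verifying the Pl\"ucker-coordinate description of $\inn_w(I)_d$ asserted in the first paragraph: a priori, $\inn_w(I)_d$ is defined as the span of initial forms $\inn_w(f)$ for $f \in I_d$, so one must show that this subspace actually equals the one cut out by the claimed initial Pl\"ucker coordinates. This is the statement that tropicalization commutes with the Pl\"ucker embedding $\Gr(s_d, S_d) \hookrightarrow \mathbb P^N$, a standard but nontrivial fact; once it is in hand, the remainder of the argument is combinatorics of piecewise linear functions together with the finite-generation reduction for initial ideals.
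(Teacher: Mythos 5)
Your overall plan — reduce to a degree-by-degree Pl\"ucker criterion and then run a common-refinement argument — is the same route the paper takes; the piece-wise-linear combinatorics in your second and third paragraphs is essentially right. But the key step, the Pl\"ucker criterion itself, is both mis-stated and unproven, and it is precisely the technical heart of the theorem, so the proposal has a genuine gap rather than a shortcut.

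Two specific problems. First, the recipe in your opening paragraph does not produce a matrix over $R$, let alone one with row span $\inn_w(I)_d$. If you act on the columns by $\operatorname{diag}(t^{w\cdot u})$ (note the weight is $w\cdot u$, not $w_i$) and then divide the whole $s\times\binom{n+d}{n}$ matrix by $t^{W_d}$, every $s\times s$ minor gets divided by $t^{sW_d}$, so the minimum minor valuation becomes $(1-s)W_d$, which is typically negative; the entries need not lie in $R$, and nothing can be reduced mod $\mathfrak m$. The whole construction is also not invariant under $\mathrm{GL}_s(K)$-change of basis on the rows, whereas $\inn_w(I)_d$ is. The correct normalization is per row: one must first row-reduce $A_d$ by $\tilde A^{-1}$ so that the $J$-columns form an identity block (for $J$ a minimizer of $\trop(g_d)$ at $w$), and then the row indexed by $x^u\in J$, after twisting and dividing by $t^{w\cdot u}$, lands in $R^N$ and reduces to $x^u$. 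This is exactly what the paper's proof does with the matrix $A' = \tilde A^{-1}A_d$ and the inequality $\val(c_v) + w\cdot v > w\cdot u$.

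Second, even with the normalization fixed, the statement that $\inn_w(I)_d$ has Pl\"ucker coordinates equal to the ``initial Pl\"ucker coordinates'' of $I_d$ — your ``tropicalization commutes with the Pl\"ucker embedding'' — is not a freely citable black box in the form you need (the standard Speyer-type result concerns the tropicalization of the linear variety $V\subseteq T^N$, not the reduction of the twisted lattice $V_w\cap R^N$ modulo $\mathfrak m$). Proving it here is essentially proving Theorem~\ref{t:tropicalGrobnerComplex}: the paper's computation with $A'$ is a proof of exactly this commutativity in the case of a unique minimizer, and the perturbation argument in the ``if'' direction handles the case of a non-unique minimizer by pushing to an adjacent maximal cell via Corollary~\ref{l:initialofinitial}. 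So your paragraph four correctly identifies the missing step, but deferring it leaves the hard part of the theorem unaddressed. If you want to pursue the Pl\"ucker phrasing, you should still carry out the matrix computation (with the correct per-row normalization) and invoke Lemma~\ref{l:Hilbertfunction} to pin down $\dim_\K\inn_w(I)_d = s$, which is what makes the $J$-monomials span all of $\inn_w(I)_d$ rather than just a subspace.

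One smaller remark: your claim that ``every initial ideal $\inn_w(I)$ is generated in degrees $\leq D$'' is true but the ``standard Gr\"obner argument'' for lifting generators along $\inn_{\mathbf v}$ deserves a line of justification in this valuation setting; the paper instead sidesteps it by observing that once $\inn_w(I)_d$ is shown to be a span of monomials for all $d\leq D$, the ideal $\inn_w(I)$ is itself monomial and hence generated in degrees $\leq D$ by definition of $D$, after which Lemma~\ref{l:Hilbertfunction} upgrades the degree-wise inclusion to equality.
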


\begin{proof}
We show that if $w' \in \Gamma^{n+1}$ lies in the interior of $\sigma$
if and only if $\inn_{w'}(I) = \inn_w(I)$.  Note that
$\Sigma_{\trop(g)}$ is the common refinement of the polyhedral
complexes $\Sigma_{\trop(g_d)}$ for $d \leq D$, where
$\Sigma_{\trop(g_d)}$ is the coarsest polyhedral complex for which
$\trop(g_d)$ is linear on each polyhedron.  Thus it suffices to
restrict to a fixed $d \leq D$, and let $\sigma_d$ be the polyhedron
of $\Sigma_{\trop(g_d)}$ containing $\sigma$.  In what follows we show
that $w' \in \Gamma^{n+1}$ lies in the interior of $\sigma_d$ if and
only if $\inn_{w'}(I)_d = \inn_w(I)_d$.  This suffices because
$\inn_{w'}(I)=\inn_{w}(I)$ if and only if $\inn_{w'}(I)_d =
\inn_{w}(I)_d$ for all $d \leq D$.

For the only if direction, note that if $w'$ lies in the interior of
$\sigma_d$ then the minimum in $\trop(g_d)$ is achieved at the same
term for $w$ and for $w'$.  Since $\sigma_d$ is a maximal polyhedron,
this minimum is achieved at only one term, which we may assume is the
one indexed by $J \in \mathcal M_d$.

Let $\tilde{A}$ be the $s \times s$ submatrix of $A_d$ containing
those columns corresponding to monomials in $J$, and consider the
matrix $A'=\tilde{A}^{-1}A_d$.  This shifts the valuations of the
minors: $\val(A'^{J'})=\val(A^{J'}_d)-\val(\det(\tilde{A}))$.  The
matrix $A'$ has an identity matrix in the columns indexed by $J$, so
each row gives a polynomial in $S_d$ indexed by $x^u \in J'$.  Let
$\tilde{f}_u = x^u + \sum_{x^v \not \in J'} c_v x^v$ be the polynomial
indexed by $x^u$.  Note that the minor of $A'$ indexed by $J_v=J
\setminus \{ x^u\} \cup \{x^v \}$ for $x^v \not \in J$ is $c_v$, up to
sign, so
\begin{align*} \val(A'^{J_v})+\sum_{x^{u'} \in
  J_v} w \cdot u' &=  \val(A_d^{J_v}) - \val(\det(\tilde{A})) + \sum_{x^{u'} \in
  J_v} w \cdot u' \\
& > \val(A_d^{J}) - \val(\det(\tilde{A}))+\sum_{x^{u'} \in J} w \cdot u'\\
&= \val({A'}_d^{J}) + \sum_{x^{u'} \in J_v} w \cdot u'+ w \cdot u - w\cdot v\\
&= 0+
\sum_{x^{u'} \in J_v} w \cdot u'+ w \cdot u - w\cdot v\\
\end{align*}
Thus $\val(c_v)+ w \cdot v > w \cdot u$ for any $v$ with $x^v \not \in
J$, so $\inn_w(\tilde{f}_u)=x^u$.  This means that $x^u \in
\inn_w(I)_d$.  Since $\dim_{\K} \inn_{w}(I)_d=s$ by
Lemma~\ref{l:Hilbertfunction}, $J$ is precisely the collection of
monomials in $\inn_w(I)_d$.  Since $|J|=s=\dim_{k} \inn_{w}(I)_d =
\inn_{w'}(I)_d$ we have $\inn_w(I)_d = \inn_{w'}(I)_d$ as required.
Note that this also shows that $\inn_w(I)$ is a monomial ideal, since
in all degrees $d$ up to the bound $D$ on its generators $\inn_w(I)_d$
is spanned by monomials in $\inn_w(I)$

For the if direction, suppose that $w'$ does not lie in the interior
of $\sigma_d$.  This means that there is some $J' \in \mathcal M_d$
with $J' \neq J$ and $\val(A_d^{J'})+\sum_{u \in J'} w' \cdot u \leq
\val(A_d^L)+\sum_{u \in L} w' \cdot u$ for any $L$.  We may choose
$J'$ so that $\sum_{u \in J'} u$ is a vertex of the convex hull of all
$\sum_{u \in J''} u$ with $J''$ satisfying the inequality.  This means
that there is $\mathbf{v} \in \mathbb Q^{n+1}$ with $\mathbf{v} \cdot
(\sum_{u \in J'} u - \sum_{u \in J''} u) <0$ for any such $J''$.  Then
for sufficiently small $\epsilon >0$ we have $\val(A_d^{J'})+\sum_{u
  \in J'} (w'+\epsilon \mathbf{v}) \cdot u < \val(A_d^L)+\sum_{u \in
  L} (w'+\epsilon \mathbf{v}) \cdot u$ for all $L \in \mathcal M_d$
with $L \neq J'$.  The above argument then shows that
$\inn_{w'+\epsilon \mathbf{v}}(I)_d = \spann \{ x^u : x^u \in J'\}$.
By Corollary~\ref{l:initialofinitial} we have $\inn_{w'+\epsilon
  \mathbf{v}}(I) = \inn_{\mathbf{v}}(\inn_{w'}(I))$, so this means
that $\inn_{w'}(I)_d$ is not the span of those monomials in $J$, and
thus $\inn_{w'}(I)_d \neq \inn_w(I)_d$.
\end{proof}

Theorem~\ref{t:Grobnercomplex} is now a straightforward corollary of Theorem~\ref{t:tropicalGrobnerComplex}.

\begin{proof}[Proof of Theorem~\ref{t:Grobnercomplex}]
Theorem~\ref{t:tropicalGrobnerComplex} states that all top-dimensional
regions of the $\Gamma$-rational polyhedral complex
$\Sigma_{\trop(f)}$ are of the form $\overline{C_{I}[w]}$ for some $w
\in \Gamma^{n+1}$ with $\inn_w(I)$ a monomial ideal.  For any $w \in
\Gamma^{n+1}$ with $\inn_w(I)$ a monomial ideal by
Corollary~\ref{l:initialofinitial} we have $\inn_{w + \epsilon
  \mathbf{v}}(I) = \inn_w(I)$ for all $\mathbf{v} \in \mathbb Q^{n+1}$ and
all sufficiently small $\epsilon$.  This means that such a
$\overline{C_I[w]}$ is full-dimensional, so it must be one of the
top-dimensional regions of $\Sigma_{\trop(f)}$, as for $w \neq w'$ the
regions $C_I[w]$ and $C_I[w']$ are either disjoint or coincide.  It
thus remains to show that if $\inn_w(I)$ is not a monomial ideal, then
$\overline{C_I[w]}$ is a face of some $\overline{C_I[w']}$ with
$\inn_{w'}(I)$ a monomial ideal.

This follows from Corollary~\ref{l:initialofinitial} and
 Lemma~\ref{l:monomial1}.  Indeed, by Lemma~\ref{l:monomial1}
there is some $\mathbf{v} \in \mathbb Q^{n+1}$ with
$\inn_{\mathbf{v}}(\inn_w(I))$ a monomial ideal, and by
Corollary~\ref{l:initialofinitial} there is $\epsilon>0$ for which
$\inn_{w+\epsilon \mathbf{v}}(I)=\inn_{\mathbf{v}}(\inn_w(I))$.  Let
$w' = w +\epsilon \mathbf{v}$.  Let $g_1,\dots,g_s$ be a Gr\"obner
basis for $I$ with respect to $w'$, so $\inn_{w'}(I) = \langle
\inn_{w'}(g_1),\dots,\inn_{w'}(g_s) \rangle$.  Write $g_i = x^{u_i} +
\sum c_{iv} x^v$, where $\inn_w(g_i) = x^{u_i}$.  We may assume, as in
the proof of Lemma~\ref{l:Hilbertfunction}, that $c_{iv} \neq 0$
implies that $x^v \not \in \inn_{w'}(I)$.  Then the polyhedron
$\overline{C_I[w']}$ has the following inequality description:
$$\overline{C_I[w']} = \{ x \in \mathbb R^{n+1} : x \cdot u_i \leq
\val(c_{iv}) + x \cdot v : 1 \leq i \leq s \}.$$ To see this, first
note that for any $\widetilde{w} \in C_I[w']$ we have all inequalities
on the righthand side satisfied properly.  Otherwise there would some
monomial not in $\inn_{w'}(I)$ appearing in some
$\inn_{\widetilde{w}}(g_i)$, which would contradict this polynomial
lying in the monomial ideal $\inn_{\widetilde{w}}(I)=\inn_{w'}(I)$.
As the righthand set is closed, this shows the containment of
$\overline{C_I[w']}$ in the righthand set.  For the reverse inclusion,
if $\widetilde{w} \in \Gamma^{n+1}$ lies outside the righthand set,
there is some $g_i$ for which $\inn_{\widetilde{w}}(g_i)$ does not
contain $x^{u_i}$ in its support.  Let $b=\widetilde{w} \cdot u_i -
\min \{\val(c_{vi}+ \widetilde{w} \cdot v : c_{vi} \neq 0\}$.  By
assumption $b>0$.  If $\widetilde{w} \in \overline{C_I[w']}$ then for
all $\epsilon>0$ there is $\mathbf{u}'$ with $|\mathbf{u}'|<\epsilon$
and $\widetilde{w}+\mathbf{u}' \in C_I[w']$.  Choose $\epsilon >0$
sufficiently small so that all $\mathbf{u}'$ with
$|\mathbf{u}'|<\epsilon$ satisfy $\mathbf{u}' \cdot (v-u_i)<b/2$ for
all $v$ with $c_{vi} \neq 0$.  Then
$\inn_{\widetilde{w}+\mathbf{u}'}(g_i) \in
\inn_{\widetilde{w}+\mathbf{u}'}(I)$ does not contain $x^{u_i}$ in its
support.  Since $\inn_{\widetilde{w}+\mathbf{u}'}(I)=\inn_{w'}(I)$ is
a monomial ideal, all terms of $\inn_{\widetilde{w}+\mathbf{u}'}(g_i)$
must lie in $\inn_{w'}(I)$, which is a contradiction, so such
$\widetilde{w}$ do not lie in $\overline{C_I[w']}$, and thus
$\overline{C_I[w']}$ has the claimed description.

The argument in the second paragraph implies that $C_I[w]$ lies in
$\overline{C_I[w']}$, so we just need to show that it is a face.  Note
that $\{ \inn_w(g_1),\dots,\inn_w(g_s) \}$ is a Gr\"obner basis for
$\inn_w(I)$ with respect to $\mathbf{v}$.  If $\widetilde{w} \in
\Gamma^{n+1}$ satisfies $\inn_{\widetilde{w}}(I) = \inn_w(I)$, then we
must have $\inn_{\widetilde{w}}(g_i) = \inn_w(g_i)$.  If not,
$\inn_{\widetilde{w}}(g_i)$ must still have $x^{u_i}$ in its support,
or we would not have $\inn_{\mathbf{v}}(\inn_{\widetilde{w}}(I)$ equal
to the monomial ideal $\inn_{w'}(I)$.  But then
$\inn_{\widetilde{w}}(g_i)-\inn_w(g_i) \in \inn_w(I)$, and this
polynomial does not contain any monomials from $\inn_{w'}(I)$,
contradicting $\inn_{\mathbf{v}}(\inn_w(I)) = \inn_{w'}(I)$.  Thus
$\widetilde{w}$ lies in the polyhedron 
\begin{align*}
\{ x \in \mathbb R^{n+1} : & \,\, u_i \cdot x \leq \val(c_{iv}) + x \cdot
v \text{ and }  u_i \cdot x = v' \cdot x \text{ for all }  1 \leq i \leq s \\ & \text{ and } x^{v'} \text{ in the
  support of } \inn_w(g_i) \}.\\
\end{align*}
 On the other hand, any $\widetilde{w}
\in \Gamma^{n+1}$ lying in this set has $\inn_{\widetilde{w}}(g_i)=
\inn_w(I)$, so $\inn_w(I) \subseteq \inn_{\widetilde{w}}(I)$, and so
by Lemma~\ref{l:Hilbertfunction} we have equality, so $\widetilde{w}
\in C_I[w]$.  Since this polyhedron is the intersection of
$\overline{C_I[w']}$ with a supporting subspace it is a face as
required.
\end{proof}
\begin{remark}
The construction of the Gr\"obner complex as the regions where a piece-wise linear 
tropical function is linear  shows that this polyhedral complex is a {\em regular
  subdivision}.  This notion originates in the work of Gelfand,
Kapranov, and Zelevinsky \cite[Chapter 7]{GKZ}, where such
subdivisions were called coherent; see also \cite[Chapter
  5]{TriangulationsBook}.  The content here is that the piecewise
linear function $\trop(f)$ is concave.
\end{remark}

\begin{remark}
The polynomial $g$ is homogeneous of degree $L=\sum_{d=1}^D
\dim_{K}(I_d)$.  Write $g = \sum c_u x^u$, where the sum is over $u
\in \mathbb N^{n+1}$ with $|u|=L$.  When $K$ has the trivial
valuation, the regions where $\trop(g)$ is linear are the cones of the
normal fan of the polytope $\conv(u \in \mathbb N^{n+1} : c_u \neq
0)$.  This polytope is known as the state polytope of $I$, and was
first described in \cite{BayerMorrison}.  The construction given above
mimics this construction; see \cite[Chapter 2]{GBCP} for an exposition
in this case.  When $K$ has a nontrivial valuation, the Gr\"obner
complex agrees with the normal fan to the state polytope of $I$ for
large $w$, and is the dual complex to a regular subdivision of the
state polytope.
\end{remark}

\begin{remark}
When $K$ has the trivial valuation we do not need to assume that the
ideal $I$ is homogeneous to define the Gr\"obner fan.  In this case Anders
Jensen gave an example in \cite{AndersNonRegular} of an ideal $I
\subseteq \mathbb C[x_1,x_2,x_3,x_4]$ for which the Gr\"obner fan is
not a regular subdivision.  However if we take $X \subset T^4$ to be
the variety defined by the ideal $I \mathbb C[x_1^{\pm 1},x_2^{\pm 1},
  x_3^{\pm 1}, x_4^{\pm 4}]$, then $\trop(X)$ is the support of a
subcomplex of this Gr\"obner fan, and also the support of a
subcomplex of a regular subdivision.  This is not a contradiction, as
the regular subdivision coming from the Gr\"obner fan of the
homogenization can be much finer than the nonregular one.
\end{remark}

\noindent 
{\bf Acknowledgements.} 
This paper owes its existence to the Bellairs workshop, so I am
grateful to the organizers.   I
suspect few conferences in my career will beat it on both the interest
of the mathematics and the pleasantness of the surroundings.  I also
wish to acknowledge an intellectual debt to Bernd Sturmfels from whom
I learnt many of these ideas, and with whom I have discussed many of
the details of exposition over the writing of \cite{TropicalBook}.  Thanks also to Anders Jensen and Josephine Yu for help with Remark~\ref{r:noncanonical}, and to Frank Sottile for the impetus for Remark~\ref{r:splitting}.

\end{document}